\definecolor{bluegreen2}{RGB}{0, 85, 127}
\setlist{itemsep=1.5mm}
\theoremstyle{plain}
\newtheorem{theorem}{Theorem}[section]
\newtheorem{prop}[theorem]{Proposition}
\newtheorem{cor}[theorem]{Corollary}
\newtheorem{lemma}[theorem]{Lemma}
\theoremstyle{definition}
\newtheorem{example}[theorem]{Example}
\theoremstyle{remark}
\newtheorem{remark}[theorem]{Remark}
\newcommand{\RR}{\mathbb{R}}
\newcommand{\CC}{\mathbb{C}}
\newcommand{\NN}{\mathbb{N}}
\newcommand{\ZZ}{\mathbb{Z}}
\newcommand{\PP}{\mathbb{P}}
\newcommand{\LL}{\mathbb{L}}
\newcommand{\one}{\mathbf{1}}
\newcommand{\zero}{\mathbf{0}}
\newcommand{\ba}{{\mathbf{a}}}
\newcommand{\KVarC}[1]{K^{#1}_0(\Var_\CC)}
\newcommand{\Zmot}[1]{{Z^{#1}_{\mot}}}
\newcommand{\Znv}[1]{{Z^{#1}_{\nv}}}
\newcommand{\Ztop}[1]{{Z^{#1}_{\topo}}}
\newcommand{\termino}[3]{\str^{#1}_{#2}(#3)}
\newcommand{\abs}[1]{\left\lvert#1\right\rvert}
\newcommand{\ex}{Q}
\DeclareMathOperator{\str}{W}
\DeclareMathOperator{\ord}{ord}
\DeclareMathOperator{\ac}{ac}
\DeclareMathOperator{\Var}{\textbf{Var}}
\DeclareMathOperator{\id}{id}
\DeclareMathOperator{\mot}{mot}
\DeclareMathOperator{\topo}{top}
\DeclareMathOperator{\nv}{naive}
\DeclareMathOperator{\dif}{d\!}
\DeclareMathOperator{\lcm}{lcm}
\DeclareMathOperator{\arco}{\mathcal{A}}
\DeclareMathOperator{\pol}{Pol}
\title[On a suspension formula for Denef-Loeser zeta functions]{On a suspension formula for Denef-Loeser zeta functions}
\author[E.~Artal]{Enrique Artal Bartolo}
\address{Departamento de Matemáticas, IUMA, Universidad de Zaragoza, C. Pedro Cerbuna 12,
50009, Zaragoza, Spain.
}
\email{\href{mailto:artal@unizar.es}{artal@unizar.es}}
\author[P.~Gonz\'alez P\'erez]{Pedro D. Gonz\'alez P\'erez}
\address{Instituto de Matemática Interdisciplinar,
Departamento de Álgebra, Geometría y Topología,
Facultad de Ciencias Matemáticas,
Universidad Complutense de Madrid,
Plaza de las Ciencias 3, 28040, Madrid, Spain
}
\email{\href{mailto:pdperezg@ucm.es}{pdperezg@ucm.es}}
\author[M.~Gonz\'alez Villa]{Manuel Gonz\'alez Villa}
\address{
Centro de investigaci\'on en
Matem\'aticas\\
Apartado Postal
402,
C.P. 36000, Guanajuato, GTO, M\'exico.}
\email{\href{mailto:manuel.gonzalez@cimat.mx}{manuel.gonzalez@cimat.mx}}
\address{
\indent Departamento de Matemáticas, IUMA, Universidad de Zaragoza,\\ C. Pedro Cerbuna 12,
50009, Zaragoza, Spain.
}
\email{\href{mailto:m.gonzalez@unizar.es}{m.gonzalez@unizar.es}}
\author[E.~Le\'on Cardenal]{Edwin Le\'on Cardenal}
\address{Departamento de Matem\'{a}ticas y Computación \\
Universidad de La Rioja \\
C. Madre de Dios 53\\
26006 Logroño, Spain.}
\email{\href{mailto:edwin.leon@unirioja.es}{edwin.leon@unirioja.es}}
\thanks{
EAB, MGV, and ELC were partially supported by MCIN/AEI/10.13039/501100011033 (grant code: PID2020-114750GB-C31)
and by Departamento de Ciencia, Universidad y Sociedad del Conocimiento del Gobierno de Arag{\'o}n
(grant code: E22\_20R: ``{\'A}lgebra y Geometr{\'i}a''). MGV and ELC were also partially supported by the European Union NextGenerationEU/PRTR and UNIZAR via María Zambrano's Program and by CONAHCYT project CF-2023-G33. PGP and MGV were partially supported by MCIN/AEI/10.13039/501100011033 (grant code: PID2020-114750GB-C33)
}
\subjclass[2020]{Primary 14E18; Secondary 14G10, 32S25, 32S45, 14B05.}
\keywords{Motivic zeta function, monodromy, hypersurface singularities, arcs and motivic integration}
\begin{document}

\begin{abstract}
Formulas for the topological zeta functions of suspensions by 2 points are due to Artal \emph{et al.}
We generalize these formulas to the motivic level and for arbitrary suspensions, by using a stratification principle
and classical techniques of generating functions in toric geometry. The same strategy is used to obtain formulas
for the motivic zeta functions of some families of non-isolated singularities related to superisolated, Lê-Yomdin, and weighted Lê-Yomdin singularities.
\end{abstract}

\maketitle

\thispagestyle{empty}

\tableofcontents

\section*{Introduction}

The original goal of this note is to compute the naive motivic local zeta function $\Znv{}(F, T)_\zero$  and its topological specialization, where
$F(x_1, \dots, x_d, z) = z^\ex - f(x_1, \ldots, x_d)\in\CC\{x_1,\dots,x_d,z\}$ is the suspension of $f\in\CC\{x_1,\dots,x_d\}$ by $\ex$ points. These functions
define germs of hypersurface singularities in $(\CC^{d+1},\zero)$ and $(\CC^d,\zero)$. The naive zeta function for $F$ is
given by
\[
\Znv{}(F, T)_\zero:=\sum_{n \in \ZZ_{>0}} \mu(\mathcal{X}_n) T^n,
\]
where $\mathcal{X}_n$ stands for the set of arcs $\varphi \in \mathcal{L}(\CC^{d+1})_\zero$ such that $\ord F \circ \varphi =n$.

A formula for the local topological zeta function $\Ztop{}(F,s)_\zero$ of the suspension  by $N=2$ points is given in  \cite[Theorem 1.1]{ACNLM-JLMS}.  As an application of the formula, the authors show that the topological zeta function is not a topological invariant of the singularity. Moreover,
a correspondent pointed out to the authors of~\cite{ACNLM-JLMS} a formula for the suspension by $\ex$ points, based upon the motivic Thom-Sebastiani theorem, see \cite[\emph{Note added in proof}, p.~53]{ACNLM-JLMS}. This generalization has apparently been unquestioned for more than two decades now, but it is inaccurate as we will see in Section~\ref{comp}.

One of the main results of this paper is the generalization of the formula in \cite[Theorem 1.1]{ACNLM-JLMS}. It is a double generalization, since
we consider arbitrary~$\ex$ instead of only $\ex=2$ and it is done for $\Znv{}(F, T)_{\zero}$ and not only for the topological zeta function. After specialization, we get the following explicit expression for the local topological zeta function
\begin{align}\label{fsuspACNLM}
	\Ztop{}(F,s)_\zero&= \frac{1}{\ex t}+
	\frac{\ex-1}{\ex}
	\cdot \frac{s}{s+1}
	\cdot \frac{t + 1}{t}
	\cdot \Ztop{}\left(f,t\right)_\zero- \frac{s}{s+1} \sum_{1\neq e \mid \ex} \frac{J_2(e)}{\ex} \Ztop{(e)}\left(f,t\right)_\zero,
\end{align}
where $t=s+\frac{1}{\ex}$,  $J_2 : \NN \rightarrow \NN$ is a Jordan's totient function and $\Ztop{(e)}$ are the twisted topological zeta functions. If we want to iterate the suspensions, we also need a formula for the twisted topological zeta functions. Expression \eqref{fsuspACNLM} and similar expressions for the twisted zeta functions can be found in Theorem~\ref{thmfsuspACNLM}. Even more, this result allows a more general setting of a pair $(F,\omega_{d+1})$ with $\omega_{d+1}$ a monomial holomorphic $(d+1)$-form.

While these formulas can be seen as a particular case of a Thom-Sebastiani formula, we give another generalization beyond that setting. Namely, we will compute the zeta functions
for $G(x_1, \dots, x_d, z):=z^p F(x_1, \dots, x_d, z)$ and a monomial $(d+1)$-differential form. This situation already appeared in~\cite{ACNLM-ASENS}, for the computation of the topological zeta function
of superisolated singularities. The key point there was the computation of the topological zeta function of the pair $(G,\omega_{d+1})$ where the values $\ex=1$ and $p>0$ are chosen for $G$, and the differential form is $\omega_{d+1}=z^d\cdot \dif x_1\wedge\cdots\wedge\dif x_d\wedge\dif z$.

Let us now describe some possible applications of our results. As pointed to the first author by S.~Gusein-Zade, the computation of the local topological zeta functions for Lê-Yomdin singularities~\cite{Yomdin74, Le80} reduces by the Stratification principle of  \cite{ACNLM-ASENS} to the computation of $\Ztop{}(G,\omega_{d+1},s)_\zero$ for a function $G$ with arbitrary values $p$ and $\ex$ and $\omega_{d+1}$ as above. This is precisely the content of our Theorem~\ref{thmpsuspACNLM}. The computation of A'Campo's monodromy zeta function for Lê-Yomdin singularities in~\cite{GLM:97, jmm:14}, the results in~\cite{ACNLM-ASENS} and Theorem~\ref{thmpsuspACNLM} are the main tools to attack the monodromy conjecture for those singularities. It will also give an alternative proof of the monodromy conjecture for superisolated singularities, a particular case of Lê-Yomdin singularities. Moreover, all these results may be generalized for quotient singularities using the techniques in this work and the results of~\cite{lmvv:20}. This would help to compute the
zeta functions for weighted Lê-Yomdin singularities, see~\cite{ABLM-milnor-number,ACMNemethi}.

The \texttt{Sagemath} package \cite{viu:22} has been particularly useful during the research on this project.

The paper is organized as follows. Section \ref{sec:settings} is devoted to introduce general settings and notations, including some arithmetic function and the basics of arc spaces and motivic/topological zeta functions. The computations of local motivic and topological zeta functions associated to a particular stratum are done in Sections \ref{sec:motivic_zeta_binomials} and \ref{sec:top_zeta_binomials}. Section \ref{sec:motivic_zeta_susp} explains how to use the Stratification principle to reduce the main results to the local computations of Sections \ref{sec:motivic_zeta_binomials} and \ref{sec:top_zeta_binomials}.  The main results are stated and proved in Section \ref{sec:MainResults}. Finally, we compare Theorem~\ref{thmfsuspACNLM} with the formula proposed in the aforementioned \emph{Note added in proof} in Section \ref{comp}.

\numberwithin{equation}{section}

\section{General settings and notations}
\label{sec:settings}

\subsection{Arithmetic functions}
\mbox{}

Denote the set of positive integers by $\NN$. Consider the commutative ring of arithmetic functions  $(A, +, \ast)$ of functions $f: \NN \rightarrow \NN$ such that $f(1) \ne 0$ with the operations of pointwise addition $+$ and the Dirichlet convolution $\ast$ that is given in terms of the usual product $\cdot$ of the integers  by
\[
(f \ast g)(n)= \sum_{d \mid n}  f(d) \cdot  g(n/d).
\]
There are some simple examples of arithmetic functions:

\begin{itemize}
\item The constant function
$\one: \NN \rightarrow \NN$ given  by  $\one(n)=1$ for any positive integer $n$.

\item The
\emph{unit} function $\epsilon  : \NN \rightarrow \NN$  given by $\epsilon (1) = 1$ and $\epsilon(n) =0 $ if $n \ne 1$.
Note that $\epsilon \ast f = f \ast \epsilon =f$ for any $f \in A$.

\item For any positive integer $k$, the functions $\sigma_k   : \NN \rightarrow \NN$, given by $\sigma_k(n) = n^k$ for any positive integer $n$.

\item The \emph{Möbius function}  $\mu  : \NN \rightarrow \NN$ is the function that, given a positive integer $n$,
$\mu(n)$ is the sum of the primitive $n$th roots of unity. It is well-defined since
the cyclotomic polynomials are monic polynomials over the integers.

\item The \emph{Euler's totient function} $\phi  : \NN \rightarrow \NN$ is the function that, given a positive integer $n$,
counts the number of
coprime residues modulo $n$. It satisfies the famous Gauss' identity
\begin{equation}\label{eq:Gauss}
\sum_{d \mid n} \phi(d)= n
\end{equation}

\item For any positive integer $k$, the  \emph{Jordan's totient function}  $J_k : \NN \rightarrow \NN$ is the function that,  given a positive integer $n$, counts the $k$-tuples of positive integers all less than or equal to $n$
that form a coprime $(k+1)$-tuple together with $n$.
The Jordan's totient functions are a generalization of  Euler's totient function since $J_1 = \varphi$.

\end{itemize}

\begin{remark} The values of  $J_k$  are given by the expression
\[
J_k(n) = n^k \prod_{\substack{p \mid n\\p\text{ prime}}}
\left(1 - \frac{1}{p^k}\right),
\]
Some values of $J_2$ are listed in OEIS~\cite{oeis007434}.
 \end{remark}

 The following Lemma will be useful to give explicit expressions for the Denef-Loeser zeta functions of a suspension.

\begin{lemma}
 The Jordan's totient functions $J_k$ and the functions $\sigma_k$ are related under Dirichlet convolution by the following expressions
\[
\sigma_k = \one \ast J_k, \text{ and }
J_k = \mu \ast \sigma_k,
\]
or, equivalently, by the expressions
\begin{equation}\label{jk}
 n^k = \sum_{d \mid n} J_k(d),  \text{ and }
 J_k(n) = \sum_{d \mid n} \mu(d) \left(\frac{n}{d}\right)^k
 .
\end{equation}
\end{lemma}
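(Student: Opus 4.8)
The plan is to establish the first identity $\sigma_k = \one \ast J_k$ directly from the combinatorial definition of the Jordan totient, and then deduce the second identity $J_k = \mu \ast \sigma_k$ by Möbius inversion (which is exactly the statement that $\one$ and $\mu$ are Dirichlet-convolution inverses). The equivalence with the two displayed formulas in \eqref{jk} is then immediate from unwinding the definition of $\ast$: the identity $\sigma_k = \one \ast J_k$ evaluated at $n$ reads $n^k = \sum_{d \mid n} \one(n/d) J_k(d) = \sum_{d \mid n} J_k(d)$, and $J_k = \mu \ast \sigma_k$ evaluated at $n$ reads $J_k(n) = \sum_{d \mid n} \mu(d) \sigma_k(n/d) = \sum_{d \mid n} \mu(d)(n/d)^k$.

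The heart of the matter is therefore the first identity, which I would prove by a counting argument. Fix $n$ and count the set $S$ of $k$-tuples $(a_1, \dots, a_k)$ with $1 \le a_i \le n$ for all $i$; clearly $\abs{S} = n^k$. Partition $S$ according to the value $g = \gcd(a_1, \dots, a_k, n)$, which is a divisor of $n$. Writing $a_i = g b_i$, the tuples with a given $\gcd$ value $g$ correspond bijectively to $k$-tuples $(b_1, \dots, b_k)$ with $1 \le b_i \le n/g$ and $\gcd(b_1, \dots, b_k, n/g) = 1$; by the definition of the Jordan totient recalled in the excerpt, there are exactly $J_k(n/g)$ such tuples. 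Hence $n^k = \sum_{g \mid n} J_k(n/g) = \sum_{d \mid n} J_k(d)$, which is precisely $\sigma_k = \one \ast J_k$.

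For the second identity I would invoke the standard fact that $\one \ast \mu = \epsilon$, i.e. $\sum_{d \mid n} \mu(d) = \epsilon(n)$; this is itself a consequence of Gauss' identity \eqref{eq:Gauss} together with the observation that both $\phi$ and $\sigma_1$ are determined by their Dirichlet factorizations, but more simply it follows because $\sum_{d\mid n}\mu(d)$ counts $\sum_{\zeta} \sum_{d \mid n} [\zeta \text{ is a primitive } d\text{th root of } 1]$ over all $n$th roots of unity $\zeta$, which is $\epsilon(n)$ since every $n$th root of unity is primitive of exactly one order dividing $n$. Convolving both sides of $\sigma_k = \one \ast J_k$ with $\mu$ and using associativity and commutativity of $\ast$ gives $\mu \ast \sigma_k = \mu \ast \one \ast J_k = \epsilon \ast J_k = J_k$, as desired.

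I do not anticipate a serious obstacle here: the only mild subtlety is making the bijection in the counting step watertight, namely checking that $a_i = g b_i$ with $g = \gcd(a_1,\dots,a_k,n)$ really does force $\gcd(b_1,\dots,b_k,n/g)=1$ and that the correspondence is onto, but this is routine. One should also note in passing that $J_k$ as defined does lie in the ring $A$ (it is $\NN$-valued with $J_k(1) = 1 \ne 0$), so the convolution manipulations take place inside $(A,+,\ast)$ as stated.
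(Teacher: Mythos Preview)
Your proof is correct, and in fact the paper states this lemma without proof, treating it as a standard fact from elementary number theory. Your counting argument for $\sigma_k = \one \ast J_k$ is exactly the natural one given the paper's combinatorial definition of $J_k$, and the deduction of $J_k = \mu \ast \sigma_k$ by convolving with $\mu$ is the expected route.

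One small wording issue: your justification of $\one \ast \mu = \epsilon$ is slightly garbled. You write that $\sum_{d\mid n}\mu(d)$ ``counts'' a certain double sum which you then claim equals $\epsilon(n)$, but as written that double sum equals $n$, not $\epsilon(n)$. What you presumably intend is that, using the paper's definition of $\mu(d)$ as the \emph{sum} (not count) of primitive $d$th roots of unity,
\[
\sum_{d\mid n}\mu(d) \;=\; \sum_{d\mid n}\;\sum_{\substack{\zeta\text{ primitive}\\ d\text{th root of }1}} \zeta \;=\; \sum_{\zeta^n=1}\zeta \;=\; \epsilon(n),
\]
the last equality because the $n$th roots of unity sum to $0$ for $n>1$. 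With this correction the argument is clean.
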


\subsection{Arc spaces}
\mbox{}

Let $\varphi$ be an element  in $\CC [[t]] \setminus \{0\}$, we denote by ${\rm ord}(\varphi)$  the \textit{order}
(with respect to~$t$) of the series  $\varphi$, and by $\mathrm{ac}(\varphi)$ the coefficient of the leading term of $\varphi$, i.e., if $\varphi= a_n t^n + a_{n+1}t^{n+1} + \cdots$ and $a_n \ne 0$, then ${\rm ord}(\varphi)=n$ and $\mathrm{ac}(\varphi)=a_n$.

Let us fix a system of coordinates $(x_1, \dots, x_d, z)$ of $\CC^d \times \CC = \CC^{d+1}$ centered at $\zero \in \CC^{d+1}$.
Denote by $\varphi \in \mathcal{L}_n(\CC^{d+1})_\zero$ the space of $n$-jets $\varphi$ of $\CC^{d+1}$ centered at $\zero$, i.e. such that $\varphi(0)=\zero$.
Denote the coordinates of $\varphi \in \mathcal{L}_n(\CC^{d+1})_\zero$ as $\varphi_{x_i}$ or $\varphi_z$, i.e., $\varphi=(\varphi_{{\bf x}}, \varphi_z)$.

\subsection{Grothendieck rings and their specializations}
\mbox{}

Let us denote by $\Var_{\CC}$ the category of complex algebraic varieties, i.e.,
reduced and separated schemes of finite type over the complex field $\CC$. Let us denote by  $\KVarC{}$ the \textit{Grothendieck group} of algebraic varieties over $\CC$, and  by the symbol $[X]$ the class of the
complex algebraic variety  $X$ in $\KVarC{}$.
Denote by
${\LL}$ the class of ${\mathbb{A}}^1_\CC$ in $\KVarC{}$ and by $\mathcal{M}_\CC$
the ring $\KVarC{}[\LL^{-1}]$.
We also consider the category of complex algebraic varieties endowed with a good action of the profinite group $\hat{\mu}$ of the roots of the unity. The corresponding Grothendieck rings are denoted by adding the superscript $\hat{\mu}$ to the usual symbols.

Denote by $\chi_{\topo}: \KVarC{}[\LL^{-1}] \rightarrow \ZZ$ the usual Euler characteristic and by $\chi_{\topo}(, \alpha) : \KVarC{\hat{\mu}}[\LL^{-1}] \rightarrow \ZZ$ the equivariant Euler characteristic with respect to a character $\alpha: \hat{\mu} \rightarrow\CC^* $ of finite order.

\subsection{Denef-Loeser zeta functions}
\label{subsec:dlzf}
\mbox{}

Motivic zeta functions can be generalized
to take into account volume forms $\omega$. We fix a volume form $\omega\in\Omega^\ell(\CC^\ell)$. Then the local naive motivic zeta function
and the local motivic zeta function of a germ $g:(\CC^\ell,\zero)\to(\CC,0)$ at~$\zero$ with respect to $\omega$ are defined as
\begin{equation}\label{eq:zetamot}
\begin{aligned}
\Znv{}(g, \omega, T)_\zero&:=
\sum_{n\in \NN}\sum_{m \in \ZZ_{\geq 0}} [\mathcal{X}_{n,m}]{\LL}^{-n\ell -m}T^n \in {\mathcal{M}}_\CC[[T]],\\
\Zmot{}(g, \omega, T)_\zero&:=\sum_{n\in \NN}\sum_{m \in \ZZ_{\geq 0}} [\mathcal{X}^1_{n, m }]{\LL}^{-n\ell-m}T^n \in {\mathcal{M}}^{\hat{\mu}}_\CC[[T]].
\end{aligned}
\end{equation}
Here $\mathcal{X}_{n,m}$ stands for the set of $n$-jets $\varphi \in \mathcal{L}_n(\CC^{\ell})_\zero$
such that $\ord g \circ \varphi =n$
and $\ord \varphi^*\omega =m$; $\mathcal{X}^1_{n,m}$ stands for the set of $n$-jets in $\mathcal{X}_{n,m}$
such that $\ac(g \circ \varphi)=1$. We also consider  the local topological zeta function $\Ztop{}(g,\omega,s)_\zero$
and the local twisted topological zeta function  $\Ztop{(e)}(g,\omega,s)_\zero$ of $g$ at $\zero$ given by
\begin{equation}\label{eq:zetatop}
\chi_{\topo}(\Znv{}(g,\omega, \LL^{-s})_\zero), \text{ and }
\chi_{\topo}((\LL -1)\Zmot{}(g,\omega, \LL^{-s})_\zero, \alpha),
\end{equation}
where $e$ is an integer and  $\alpha: \hat{\mu} \rightarrow \CC^*$ is a character of order $e$.

Let us consider an embedded  resolution of $\pi : Y \rightarrow \CC^\ell$ of $g\omega$. Let $\{E_j\mid j\in J\}$ be the irreducible components of $\pi^* g\omega$, including
exceptional and strict transform components.
As divisors, we write
\begin{equation*}
E:=(g\circ\pi)^{*}(0)=\sum_{j\in J} N_j E_j,
\quad
(\pi^*\omega)=\sum_{j\in J} (\nu_j-1) E_j
.
\end{equation*}
The intersection of the divisor $E$ with $\pi^{-1}(\zero)$ is stratified by
\begin{equation*}
\{E^\circ_I\mid \emptyset\neq I \subset J\},\qquad
E^\circ_I := \bigcap_{i \in I} E_i \setminus \bigcup_{j \not \in I} E_j.
\end{equation*}
For $I\subset J$ and a point $o_I \in E_I^\circ$ there is a suitable coordinate system
$\mathcal{U}$ in the Zariski topology with two subsets of coordinates. On one side coordinates $x_i$, $i\in I$, such that $E_i$ is defined by $x_i=0$; on the other side a $d-\abs{I}$ system of coordinates $y_1,\dots,y_{d-\abs{I}}$ which parametrizes $E_I$.
Moreover,
the pullbacks of $f$ and $\omega$ under $\pi$ will be denoted by $g_{I}$ and $\omega_{I}$, they have local equations \[
f\circ\pi=u\prod_{i\in I}x_i^{N_i},\quad
\pi^*\omega=v\prod_{i\in I}x_i^{\nu_i} \cdot
\bigwedge_{i\in I}\frac{\dif x_i}{x_i}\wedge\dif y_1\wedge\dots\wedge\dif y_{d-\abs{I}}
\]
where $u,v$ are non-vanishing functions in $\mathcal{U}$ depending on the variables
$x_i$, for $i\in I$, and $y_1,\dots,y_{d-\abs{I}}$.

For $I\subset J$, we denote
$g_I:=g\circ\pi$ in a neighbourhood of $E_I^\circ$.
For any point in $E_I^\circ$ we can choose
coordinates in an open set of the origin in  $\CC^{I}\times\CC^{d-\abs{I}}$
such that $g_I$ is the product of the monomial $\prod_{i\in I} x_i^{N_i}$
and a unit which may depend on all the variables.
We consider also
the unramified Galois cover $\tilde{E}_I^\circ$ of $E_I^\circ$
introduced
in~\cite[\S3.3]{Denef-LoeserBarca}.

The following formulas hold:
\begin{equation}\label{eq:zetas_res}
\begin{aligned}
\Znv{}(g, \omega, T)_\zero&=
\sum_{\emptyset \ne I \subset J}
[E_I^\circ] \prod_{i\in I}\frac{\LL-1}{\LL^{\nu_i}-T^{N_i}}T^{N_i} \in {\mathcal{M}}_\CC[[T]],\\
\Zmot{}(g,\omega, T)_\zero&=
\sum_{\emptyset \ne I \subset J}
[\tilde{E}_I^\circ] \prod_{i\in I}\frac{\LL-1}{\LL^{\nu_i}-T^{N_i}}T^{N_i} \in {\mathcal{M}}^{\hat{\mu}}_\CC[[T]],\\
\Ztop{}(g,\omega,s)_\zero&=
\sum_{\emptyset \ne I \subset J} \frac{ \chi( E_I^\circ)}{\prod_{i \in I} (N_is + \nu_i)}, \\
\Ztop{(e)}(g,\omega, s)_\zero&=\sum_{\substack{\emptyset \ne I \subset J\\ e \mid N_i}} \frac{ \chi( E_I^\circ)}{\prod_{i \in I} (N_is + \nu_i)}.
\end{aligned}
\end{equation}
According to \cite{DL-JAMS} the local topological zeta function of a regular function $f$ satisfies  $\Ztop{}(f,0)_\mathbf{0} =1$. This identity can be rephrased,  using the expression of the local zeta function in terms of an embedded resolution \eqref{eq:zetas_res}, as
\begin{equation}\label{DLz(0)=1}
\sum_{\emptyset \ne I \subset J}  \frac{\chi(E_I^\circ) }{\displaystyle\prod_{i \in I}  \nu_i} = 1.
\end{equation}
For the sake of completeness we state a generalization of the Stratification principle,
that will we be used in~\S\ref{sec:motivic_zeta_susp}.

\begin{prop}[{\cite[Stratum principle 1.2]{ACNLM-ASENS}}]\label{prop:sp}
Let $X$ be a smooth algebraic variety, and
$f : X \rightarrow \CC$ be a regular function.
Let $X = \bigcup_{S \in\mathcal{S}} S$ be a finite prestratification of $X$ such that for each $x \in X$, the local naive zeta function at $x$ depends  only on the stratum $S$ containing $x$. Let us denote by $\Znv{}(f, \omega, T; S)_{x}$ the common zeta function associated with the stratum $S$. Then,
\[
\Znv{}(f,\omega,T)_x = \sum_{S \in \mathcal{S}} \Znv{}(f, \omega, T ; S)_{x}.
\]
\end{prop}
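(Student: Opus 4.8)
The plan is to reduce the statement to the additivity of the class map $[\,\cdot\,]$ in $\mathcal{M}_\CC$, applied fibrewise over the base-point map on jet spaces. First I would recall the definition \eqref{eq:zetamot}: $\Znv{}(f,\omega,T)_x$ is assembled from the classes $[\mathcal{X}_{n,m}]$, where $\mathcal{X}_{n,m}\subseteq\mathcal{L}_n(X)$ is the set of $n$-jets $\varphi$ with $\varphi(0)$ in the relevant locus, $\ord f\circ\varphi=n$ and $\ord\varphi^*\omega=m$. The structural input is that each $\mathcal{X}_{n,m}$ is a cylinder, i.e.\ the preimage of a constructible set under a truncation $\mathcal{L}_n(X)\to\mathcal{L}_{n_0}(X)$ for some $n_0\le n$, and that, since $X$ is smooth, the truncations $\mathcal{L}_{n+1}(X)\to\mathcal{L}_n(X)$ are affine bundles; hence the classes are well defined and the series in \eqref{eq:zetamot} make sense in $\mathcal{M}_\CC[[T]]$ (this is the arc-space formalism of \cite{Denef-LoeserBarca}). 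Write $\rho_n\colon\mathcal{L}_n(X)\to X$, $\varphi\mapsto\varphi(0)$.

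Second, the finite prestratification $X=\bigsqcup_{S\in\mathcal{S}}S$ into locally closed subvarieties induces, for each $n$ and $m$, a decomposition of $\mathcal{X}_{n,m}$ into the locally closed pieces $\mathcal{X}_{n,m}\cap\rho_n^{-1}(S)$, each again a cylinder; additivity of $[\,\cdot\,]$ gives $[\mathcal{X}_{n,m}]=\sum_{S\in\mathcal{S}}[\mathcal{X}_{n,m}\cap\rho_n^{-1}(S)]$. Multiplying by $\LL^{-n\ell-m}$, summing over $m$, and collecting the coefficient of $T^n$, one obtains $\Znv{}(f,\omega,T)_x=\sum_{S\in\mathcal{S}}\Znv{}(f,\omega,T;S)_x$, where $\Znv{}(f,\omega,T;S)_x$ is by construction the relative naive zeta function with base point constrained to $S$. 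It then remains to see that this summand is intrinsically attached to $S$: this is where the hypothesis is used. Since the local naive zeta function is the same power series at every point of $S$, the fibres of $\mathcal{X}_{n,m}\cap\rho_n^{-1}(S)\to S$ carry equal classes; after refining $\mathcal{S}$ so that these morphisms become piecewise-trivial fibrations (Noetherian induction on $X$ together with generic triviality along $S$, which in the applications is supplied by a monomial-times-unit normal form for $f$ in suitable charts along the stratum), one gets $[\mathcal{X}_{n,m}\cap\rho_n^{-1}(S)]=[S]\cdot[\mathcal{X}_{n,m}\cap\rho_n^{-1}(x)]$ for any $x\in S$, whence $\Znv{}(f,\omega,T;S)_x=[S]\cdot\Znv{}(f,\omega,T)_x$ for $x\in S$, the factor $[S]$ being absorbed into the notation of the statement.

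The main obstacle, and the only point that is not formal, is the bookkeeping of cylinders across truncation levels: one must check that the pieces $\mathcal{X}_{n,m}\cap\rho_n^{-1}(S)$ are genuinely cylinders of bounded level, that the decomposition is compatible with the projective system $(\mathcal{L}_n(X))_n$, and that the resulting sums of series converge in $\mathcal{M}_\CC[[T]]$; this is precisely where smoothness of $X$ is needed, and it is also the constraint one must respect when refining $\mathcal{S}$. With this in place, the $\Zmot{}$ and $\Ztop{}$ versions follow immediately: for $\Zmot{}$ one runs the same argument in the $\hat{\mu}$-equivariant ring $\mathcal{M}^{\hat{\mu}}_\CC$, tracking the $\ac$-datum and the covers $\tilde{E}_I^\circ$; for the topological zeta functions one applies the realisation homomorphisms $\chi_{\topo}$ and $\chi_{\topo}(\,\cdot\,,\alpha)$, which, being ring homomorphisms, commute with the finite sum over $\mathcal{S}$.
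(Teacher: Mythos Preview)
The paper does not give a proof; the proposition is quoted from \cite[Stratum principle~1.2]{ACNLM-ASENS} and invoked without argument in \S\ref{sec:motivic_zeta_susp}. Your outline---additivity of $[\,\cdot\,]$ applied to the jet-level decomposition induced by the stratification, followed by piecewise triviality to extract the factor $[S]$---is the standard one and is correct in spirit.

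One point deserves tightening. As literally stated, the subscript $x$ means arcs based at a single point, so a stratification of $X$ itself does not partition $\mathcal{L}_n(X)_x$: every such arc has $\rho_n(\varphi)=x$, and your decomposition $\mathcal{X}_{n,m}=\bigsqcup_S\mathcal{X}_{n,m}\cap\rho_n^{-1}(S)$ is trivial. In the paper's actual use the stratification lives on a proper birational modification $\pi\colon Y\to X$ (in \S\ref{sec:motivic_zeta_susp}, the map $\pi\times\id$), and arcs at $x$ are partitioned by where their \emph{lift} to $Y$ is based inside $\pi^{-1}(x)$. Your map $\rho_n$ should therefore be the base-point map on $\mathcal{L}_n(Y)$, composed with the bijection (outside a measure-zero set) between arcs on $X$ at $x$ and arcs on $Y$ based in $\pi^{-1}(x)$; the change-of-variables formula for motivic integration handles the Jacobian, which here is absorbed into $\omega$. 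With that adjustment your piecewise-triviality step yields exactly the factor $[E_I^\circ]$ appearing in Lemma~\ref{lema:globalW}, and the rest of your argument goes through.
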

The above principle extends to the local motivic zeta function, and implies similar statements for the local topological and local twisted topological zeta functions.

\subsection{General setting}
\label{subsec:general}
\mbox{}

Consider a regular function $f:(\CC^d,\zero) \rightarrow (\CC,0)$ and a fixed $\ex \in \ZZ_{>0}$. The suspension of $f$ by $\ex$ points is defined by the regular function
$F: (\CC^{d+1}, \zero) \rightarrow (\CC, 0)$, given by the expression
\[
F(x_1, \dots, x_d, z) = z^\ex - f(x_1, \ldots, x_d).
\]
With applications in mind we will also consider  $G(x_1, \dots, x_d, z):=z^p F(x_1, \dots, x_d, z)$, for
$p\geq0$.

\subsection{Notations}
\mbox{}

Let $I$ be a finite totally ordered set and let $H$ be an algebraic object  (e.g. $\NN,\ZZ,\RR,\dots$).
The canonical
basis of $H^I$ is denoted by $\mathbf{e}_k$, with
$k\in I$. An element $\mathbf{v}\in H^I$ is written as $\sum_{k \in I} v_k \mathbf{e}_k$, where
$v_k$ is called  the coordinate
of $\mathbf{v}$ in $\mathbf{e}_k$. We denote by $\abs{\mathbf{v}}$
the sum $\sum_{k \in I} v_k$ of the coordinates in the canonical basis.

If $\mathbf{v}\in H^I$ and $L\subset I$,
then we denote by  $\mathbf{v}_L\in H^L$ the \emph{vector} obtained
by deleting the entries corresponding to $I\setminus L$.
Sometimes we will need a superset of $I$ by adding an extra element~$z$ as the last element;
the superset $I\cup\{z\}$ is denoted by $I_z$. The extra element in the canonical basis
of $H^{I_z}$ is denoted by $\mathbf{e}_z$ and the corresponding coordinate of a vector
$\mathbf{v}\in H^{I_z}$ is denoted by $v_z$.

\section{Naive and motivic Zeta functions for certain binomials}
\label{sec:motivic_zeta_binomials}

In \S\ref{sec:motivic_zeta_susp} we are going to use the Stratification principle
in the motivic setting in order to compute the naive and motivic zeta
functions of $F$ and $G$. In this section we give an explicit formula for the contributions
of the different strata coming from the resolution of $f$.

Let us fix a finite set $I$ and vectors
$\mathbf{N}\in \NN^{I_z}$, $\boldsymbol{\nu} \in \ZZ_{\geq 0}^{I_z}$; we denote $p:=N_z$. Let $\ex$ be a positive integer.
Consider $g_0(\mathbf{x},z):=z^\ex - \mathbf{x}^{\mathbf{N}_I}$
and set
\begin{align}
\label{eq:g}
g(\mathbf{\boldsymbol{x}},z):=& z^p g_0(\mathbf{x},z) =
z^p \left(z^\ex - \prod_{k\in I} x_k^{N_k}\right),\\
\label{eq:omega}
\omega :=& \mathbf{\boldsymbol{x}}^{\boldsymbol{\nu}} z^{\nu_z} \frac{\dif \mathbf{\boldsymbol{x}}}{\mathbf{x}} \frac{\dif z}{z}= \prod_{k\in I}
x_k^{\nu_k} z^{\nu_{z}}  \left(\prod_{k\in I}\frac{\dif x_k}{x_k}\right)\frac{\dif z}{z}.
\end{align}
We restrict ourselves to the space $\arco$  of arcs based at $\mathbf{0}$ not belonging to the arc space of the union of the coordinate hyperplanes and the hypersurface $V(g)$.
To such an arc $\varphi$, we associate vectors $\ba\in(\CC^*)^{I_z}$ and $\ord\varphi:=\mathbf{b}\in\NN^{I_z}$ determined by the expressions
\[
\ord\varphi_k = b_k \quad \text{  and }  \quad \ac(\varphi_k) = a_k, \quad  \text{i.e.}, \quad  \varphi_k = a_kt^{b_k} + \cdots  \, \text{ for  } k\in I_z.
\]
We consider a partition of $\arco$ in four subspaces:
\begin{align*}
\arco_{\sigma^+}\!\!&:=\{\varphi\in\arco\mid\ord(z^{\ex+p} \circ \varphi) > \ord (\mathbf{x}^{\mathbf{N}_I}z^p\circ \varphi)=\ord(g \circ \varphi)\}=
\{\varphi\in\arco\mid(\ex + p)b_{z} > \langle\mathbf{b},\mathbf{N}\rangle\}\\
\arco_{\sigma^-}\!\!&:=\{\varphi\in\arco\mid\ord(g \circ \varphi)=
\ord(z^{\ex+p} \circ \varphi) < \ord (\mathbf{x}^{\mathbf{N}_I}z^p\circ \varphi)\}=
\{\varphi\in\arco\mid(\ex + p)b_{z} < \langle\mathbf{b},\mathbf{N}\rangle\}\\
\arco_{\rho}\ &:=\{\varphi\in\arco\mid\ord(g \circ \varphi)=\ord(z^{\ex+p} \circ \varphi)= \ord (\mathbf{x}^{\mathbf{N}_I}z^p\circ \varphi)\}\\
&=
\{\varphi\in\arco\mid(\ex + p)b_{z} = \langle\mathbf{b},\mathbf{N}\rangle,\
\ac(z^{\ex+p} \circ \varphi ) \ne  \ac(\mathbf{x}^{\mathbf{N}_I}z^p  \circ \varphi)\}\\
\arco_{\rho^\ast}&:=\{\varphi\in\arco\mid\ord(g \circ \varphi)>\ord(z^{\ex+p} \circ \varphi)= \ord (\mathbf{x}^{\mathbf{N}_I}z^p\circ \varphi)\}\\
&=
\{\varphi\in\arco\mid(\ex + p)b_{z} = \langle\mathbf{b},\mathbf{N}\rangle,\
\ac(z^{\ex+p} \circ \varphi ) =  \ac(\mathbf{x}^{\mathbf{N}_I}z^p  \circ \varphi)\}
\end{align*}
The above subspaces induce a partition of each  space $\mathcal{X}_{n,m}$ of $n$-jets into four subsets
\[
\mathcal{X}_{n,m}^{\bullet} = \{ \varphi \in \mathcal{X}_{n,m} \mid\varphi \in\arco_\bullet \}, \quad  \mbox{ with } \quad \bullet \in \{ \sigma^+, \rho , \sigma^-, \rho^\ast \},
\]
and splits the (naive) zeta function into four summands
\begin{equation} \label{eq:W}
  \Znv{}(g, \omega, T)_\zero =\!\!\!\!\!\!\! \!\!\!\sum_{\bullet \in \{ \sigma^+, \rho , \sigma^-, \rho^\ast \}} \!\!\!\!\!\!\!\!\!\!\!\!\str^{\bullet}, \qquad
\text {with }  \, \str^\bullet :=\sum_{n\in \NN}\sum_{m \in \ZZ_{\geq 0}} [\mathcal{X}_{n,m}^\bullet]{\LL}^{-n(\abs{I}+1) -m}T^n.
\end{equation}
The spaces $\arco_\bullet$ suggest the following conical subdivision of $\RR^{I_z}_{\geq 0}$ which is dual to the Newton polyhedron of $g$.
\begin{align*}
\sigma^{+}\!&:=\{\mathbf{b}\in\RR^{I_z}_{\geq 0}\mid\langle\mathbf{b},\mathbf{N}\rangle
< (\ex + p)b_{z}\},\\
\sigma^{-}\!&:=\{\mathbf{b}\in\RR^{I_z}_{\geq 0}\mid\langle\mathbf{b},\mathbf{N}\rangle
> (\ex + p)b_{z}\},\\
\rho\ &:=\{\mathbf{b}\in\RR^{I_z}_{\geq 0}\mid\langle\mathbf{b},\mathbf{N}\rangle
= (\ex + p)b_{z}\}.
\end{align*}
For $\mathbf{b}\in\RR^{I_z}_{\geq 0}$ we denote
\[
m_{\mathbf{b}}:=\langle\mathbf{b},\boldsymbol{\nu}\rangle-\abs{\mathbf{b}},\text{ and }
n_{\mathbf{b}}:=
\begin{cases}
\langle\mathbf{b},\mathbf{N}\rangle&\text{ if }\mathbf{b}\in\sigma^+\\
(\ex + p)b_{z} &\text{ if }\mathbf{b}\in\sigma^-\\
 (\ex + p)b_{z}=\langle\mathbf{b},\mathbf{N}\rangle&\text{ if }\mathbf{b}\in\rho.
\end{cases}
\]
These quantities are related with the order of the pullback of $\omega$ and $g$ respectively. More precisely, if $\varphi$ is an arc in $\arco$
of order $\mathbf{b}$ and $\varphi\in\arco_\bullet$, for  $\bullet \in \{ \sigma^+, \sigma^-,\rho \}$, then $m_{\mathbf{b}}=\ord(\varphi^*\omega)$, while $n_{\mathbf{b}}=\ord(g\circ\varphi)$.
Hence,  we can write
\[
W^\bullet =  \sum_{ \mathbf{b} \in \bullet  \cap \NN^{I_z}} [\mathcal{X}_{n_{\mathbf{b}},m_{\mathbf{b}}}^\bullet]
\LL^{-n_{\mathbf{b}} (\abs{I} + 1)-m_{\mathbf{b}}}T^{n_{\mathbf{b}}}.
\]
For $\varphi\in\arco_{\rho^\ast}$, then $\mathbf{b}=\ord\varphi\in\rho$, $m_\mathbf{b}=\ord(\varphi^*\omega)$ and
$n_\mathbf{b}<\ord(g\circ\varphi)$.
Then,
\[
W^{\rho^\ast} = \sum_{ \mathbf{b} \in \rho  \cap \NN^{I_z}}
\left (\sum_{i=1}^{\infty}   [\mathcal{X}_{n_{\mathbf{b}}+i,m_{\mathbf{b}}}^{\rho^\ast}] \LL^{-i(\abs{I}+1)} T^i
\right) \LL^{-n_{\mathbf{b}}(\abs{I} + 1) - m_{\mathbf{b}}} T^{n_{\mathbf{b}}}.
\]
The following lemma can be deduced from \cite{dh:01,ACNLM-AMS,GG:14}.
The key of the proof is to study the class $[V(g_0)]$ where
$V(g_0)$ is the zero locus of $g_0$ in $(\CC^*)^{I_z}$ and the class of the complement of $V(g_0)$ in $(\CC^*)^{I_z}$.
Let us denote $N_I:=\gcd \mathbf{N}_I$ and $e_I:=\gcd(\ex,N_I)$. Since $V(g_0)$ is in $(\CC^*)^{I_z}$,
we can make a toric change of variables such that
\[
V(g_0)\cong(\CC^*)^{\abs{I}-1}\times\{(y,z)\in(\CC^*)^2\mid z^\ex - y^{N_I}=0\}\cong e_I\text{ copies of }(\CC^*)^{\abs{I}}.
\]

\begin{lemma}\label{lema:b}
Let $\mathbf{b}\in\NN^{I_z}$.
\begin{enumerate}[label=\rm(\arabic{enumi})]
\item If $\mathbf{b}\in\sigma^\pm$, then
$[\mathcal{X}_{n_\mathbf{b},m_\mathbf{b}}^{\sigma^\pm}] \LL^{-n_{\mathbf{b}}(\abs{I} + 1) - m_{\mathbf{b}}}=
(\LL-1)^{\abs{I} + 1}\LL^{-\langle\mathbf{b},\boldsymbol{\nu}\rangle}$.
\item If $\mathbf{b}\in\rho$, then
\[
[\mathcal{X}_{n_\mathbf{b}, m_\mathbf{b}}^{\rho}]\LL^{-n_{\mathbf{b}}(\abs{I} + 1) - m_{\mathbf{b}}} =
(\LL-1)^{\abs{I}}(\LL-1 - e_I)\LL^{- \langle\mathbf{b},\boldsymbol{\nu}\rangle}.
\]
\item If $\mathbf{b}\in\rho$ and $i>0$, then
\[
[\mathcal{X}_{n_\mathbf{b} + i, m_\mathbf{b}}^{\rho^\ast}] \LL^{-i(\abs{I}+1)-n_{\mathbf{b}}(\abs{I} + 1) - m_{\mathbf{b}}}=
e_I(\LL - 1)^{\abs{I}+1}\LL^{-i -\langle\mathbf{b},\boldsymbol{\nu}\rangle}.
\]
\end{enumerate}
\end{lemma}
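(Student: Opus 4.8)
The plan is to fix $\mathbf{b}\in\NN^{I_z}$ and compute the class of the set of jets $\varphi$ with $\ord\varphi_k=b_k$ for all $k\in I_z$ that lie in the piece $\arco_\bullet$; this is what the left-hand sides of (1)--(3) denote. I would substitute $\varphi_k=t^{b_k}\psi_k$, with $\psi_k$ a polynomial and $\psi_k(0)=:a_k\in\CC^*$, so that
\[
g\circ\varphi=t^{pb_z}\psi_z^p\Bigl(t^{\ex b_z}\psi_z^\ex-t^{\langle\mathbf{b}_I,\mathbf{N}_I\rangle}\prod_{k\in I}\psi_k^{N_k}\Bigr),
\]
while $\ord(\varphi^*\omega)=m_\mathbf{b}=\langle\mathbf{b},\boldsymbol{\nu}\rangle-\abs{\mathbf{b}}$ automatically. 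Since $n_\mathbf{b}\ge N_kb_k\ge b_k$ and $n_\mathbf{b}\ge b_z$, a jet of order $n_\mathbf{b}$ already records all the $a_k$, so membership in $\arco_\bullet$ is a condition on $\mathbf{a}=(a_k)_{k\in I_z}\in(\CC^*)^{I_z}$ (together, in the $\rho^\ast$ case, with finitely many further coefficients), while the other coefficients of $\varphi$ are unconstrained. Once the class of this admissible leading-coefficient locus is identified, the remaining powers of $\LL$ — the free-coefficient count $\LL^{n_\mathbf{b}(\abs{I}+1)-\abs{\mathbf{b}}}$, the normalization $\LL^{-n_\mathbf{b}(\abs{I}+1)-m_\mathbf{b}}$ (resp.\ with an extra $\LL^{-i(\abs{I}+1)}$), and the identity $m_\mathbf{b}+\abs{\mathbf{b}}=\langle\mathbf{b},\boldsymbol{\nu}\rangle$ — are routine bookkeeping producing the factor $\LL^{-\langle\mathbf{b},\boldsymbol{\nu}\rangle}$ (and the extra $\LL^{-i}$ in item (3)). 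So the substance is the leading-coefficient class in each case.

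For $\mathbf{b}\in\sigma^\pm$ the two monomials in the bracket have distinct $t$-orders, so $\ord(g\circ\varphi)=n_\mathbf{b}$ for every $\mathbf{a}$ and the admissible locus is all of $(\CC^*)^{I_z}$, of class $(\LL-1)^{\abs{I}+1}$; this yields (1). For $\mathbf{b}\in\rho$ one has $\ex b_z=\langle\mathbf{b}_I,\mathbf{N}_I\rangle$ and $\varphi\in\arco_\rho$ amounts to the single open condition $a_z^\ex\ne\prod_{k\in I}a_k^{N_k}$, i.e.\ $\mathbf{a}\notin V(g_0)$, so the admissible locus is $(\CC^*)^{I_z}\setminus V(g_0)$. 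Here I would use the toric change of variables recalled just before the statement, which gives $[V(g_0)]=e_I(\LL-1)^{\abs{I}}$, whence the class is $(\LL-1)^{\abs{I}+1}-e_I(\LL-1)^{\abs{I}}=(\LL-1)^{\abs{I}}(\LL-1-e_I)$; this yields (2).

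For $\mathbf{b}\in\rho$ and $i>0$ the displayed identity is $g\circ\varphi=t^{n_\mathbf{b}}\psi_z^p\,(g_0\circ\psi)$ with $\psi=(\psi_k)_{k\in I_z}$, so $\ord(g\circ\varphi)=n_\mathbf{b}+\ord_t(g_0\circ\psi)$, where $\psi$ is an $i$-jet of $(\CC^*)^{I_z}$ with $\psi(0)=\mathbf{a}$; the requirement $\ord(g\circ\varphi)=n_\mathbf{b}+i$ forces $\mathbf{a}\in V(g_0)$. So, up to the free-coefficient factor, the locus to compute is the space $\mathcal{Z}_i$ of $i$-jets of $(\CC^*)^{I_z}$ based on $V(g_0)$ that meet $V(g_0)$ with contact order exactly $i$. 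Since $\partial g_0/\partial z=\ex z^{\ex-1}$ is a unit on $(\CC^*)^{I_z}$, the morphism $\Lambda\colon(\CC^*)^{I_z}\to W:=\{(w,\mathbf{x})\mid\mathbf{x}\in(\CC^*)^I,\ w+\mathbf{x}^{\mathbf{N}_I}\ne0\}$, $(z,\mathbf{x})\mapsto(g_0(z,\mathbf{x}),\mathbf{x})$, is \'etale and satisfies $g_0=w\circ\Lambda$; as \'etale morphisms lift jets uniquely once a lift of the base point is chosen, $\mathcal{Z}_i$ is the base change, along the projection $V(g_0)\to(\CC^*)^I$, $(z,\mathbf{x})\mapsto\mathbf{x}$, of the space of $i$-jets of $W$ based on $\{w=0\}\cap W=\{0\}\times(\CC^*)^I$ whose $w$-coordinate has order exactly $i$. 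This last space is Zariski-globally trivial over $(\CC^*)^I$ with fibre $\{c\,t^i\mid c\in\CC^*\}\times\mathbb{A}^{i\abs{I}}$, of class $(\LL-1)\LL^{i\abs{I}}$, so $[\mathcal{Z}_i]=[V(g_0)]\,(\LL-1)\LL^{i\abs{I}}=e_I(\LL-1)^{\abs{I}+1}\LL^{i\abs{I}}$; combining with the free-coefficient factor and the normalizing powers of $\LL$ gives (3). This contact-order count is the content of \cite{dh:01,ACNLM-AMS,GG:14}.

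I expect item (3) to be the main obstacle, and the point that really needs care is that the degree of the finite maps $\Lambda$ and $V(g_0)\to(\CC^*)^I$ — both of degree $\ex$ — does not enter the Grothendieck class: one has $[V(g_0)]=e_I(\LL-1)^{\abs{I}}$, not $\ex(\LL-1)^{\abs{I}}$, so the toric splitting of $V(g_0)$ into $e_I$ copies of $(\CC^*)^{\abs{I}}$ is the one genuinely nontrivial input, exactly as flagged before the statement.
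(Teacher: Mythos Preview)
Your approach is exactly the one the paper indicates: it does not give a detailed argument but simply records that the key input is $[V(g_0)]=e_I(\LL-1)^{\abs{I}}$, obtained via the toric change of variables, and defers the jet-counting to \cite{dh:01,ACNLM-AMS,GG:14}. Your sketch supplies precisely those details, and the \'etale trivialization you use for item~(3) is a clean way to phrase the smooth-hypersurface contact computation from those references.

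One small point to tighten: the inequality $n_{\mathbf{b}}\ge b_k$ for all $k\in I_z$ is not true in general. In $\sigma^+$ with $p=0$ one has $n_{\mathbf{b}}=\sum_{k\in I}N_kb_k$, and the cone condition $\ex b_z>n_{\mathbf{b}}$ permits $b_z>n_{\mathbf{b}}$; symmetrically, in $\sigma^-$ one can have $b_k>n_{\mathbf{b}}=(\ex+p)b_z$ for some $k\in I$. In those situations the $n_{\mathbf{b}}$-jet does not record the corresponding leading coefficient, and indeed the exponent $n_{\mathbf{b}}(\abs{I}+1)-\abs{\mathbf{b}}$ on the right-hand side can be negative. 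This is not a real obstruction: the identity is to be read as the motivic measure of the cylinder of arcs with $\ord\varphi=\mathbf{b}$ (equivalently, compute at jet level $n\ge\max(n_{\mathbf{b}},\max_k b_k)$ and renormalize), and then your bookkeeping goes through verbatim. The inequality \emph{does} hold on $\rho$, since there $n_{\mathbf{b}}=(\ex+p)b_z\ge b_z$ and $n_{\mathbf{b}}=\langle\mathbf{b},\mathbf{N}\rangle\ge N_kb_k\ge b_k$, so your argument for item~(3) is clean as written.
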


\begin{remark}\label{Wmot}
For
$\Zmot{}(g,\omega, T)_\zero$ the computation is similar, see~\cite{gui:02,GG:14, bn:20}, and one obtains an expression
\[
\Zmot{}(g, \omega, T)_\zero = \widetilde{\str}^{\sigma^+}+ \widetilde{\str}^{\sigma^-} + \widetilde{\str}^\rho + \widetilde{\str}^{\rho^\ast},
\]
where $\widetilde{\str}^\bullet$ is defined analogously as in the naive case,
 and
\begin{enumerate}
\item if  $\mathbf{b} \in \sigma^+$ then   $[\mathcal{X}^{1,\sigma^+}_{n_\mathbf{b}, m_\mathbf{b}}] = [\mu_{\gcd(N_I, \ex+p)}](\LL-1)^{\abs{I}}\LL^{n_{\mathbf{b}}(\abs{I} + 1) - \abs{\mathbf{b}}}$;
\item if  $\mathbf{b} \in \sigma^-$ then   $[\mathcal{X}^{1,\sigma^-}_{n_\mathbf{b}, m_\mathbf{b}}] = [\mu_{\ex+p}](\LL-1)^{\abs{I} }\LL^{n_{\mathbf{b}}(\abs{I} + 1) - \abs{\mathbf{b}}}$;
\item if  $\mathbf{b} \in \rho$  then
\begin{align*}
[\mathcal{X}^{1,\rho}_{n_\mathbf{b}, m_\mathbf{b}}] &=
[V(g_1)]
\LL^{n_{\mathbf{b}}(\abs{I} + 1) - \abs{\mathbf{b}}}\\
[\mathcal{X}^{1,\rho^\ast}_{n_\mathbf{b}, m_\mathbf{b}}] &= e_I (\LL-1)^{|I|} \LL^{i\abs{I}}\text{ for } i>0.
\end{align*}
where $V(g_1)$ is the zero locus of $g_0-1$ in $(\CC^*)^{I_z}$ with a good action of $\hat{\mu}$. Note that after a toric change of variables it satisfies
\[
V(g_1)\cong(\CC^*)^{\abs{I}-1}\times\{(y,z)\in(\CC^*)^2\mid z^\ex - y^{N_I}=1\}.
\]
\end{enumerate}
\end{remark}

According to a celebrated result by Denef and Loeser  \cite[Theorem 2.2.1]{Denef-LoeserIgusa} or \cite[Corollary~3.3.2]{Denef-LoeserBarca}, each of the previous partial zeta functions can be expressed as a rational function, i.e., an element in the $\mathcal{M}_\CC$-submodule of $\mathcal{M}_\CC[[T ]]$
generated by 1 and by finite products of terms of the form $\LL^j T^i (1- \LL^j
T^i)^{-1}$, with $j \in \ZZ$ and $i \in \ZZ_{>0}$. Using the machinery of integer points on rational polyhedral cones from \cite[Section 4.6]{stanley}, it is possible to get such rational expressions.

Given a pointed convex rational polyhedral cone $\mathcal{C} \subset \RR^n$, let $\mathring{\mathcal{C}}$ be its interior. We define the generating function of  $\mathcal{C}$ as
\[
\Phi_\mathcal{C}(x) = \sum_{\boldsymbol{\alpha} \in
\mathring{\mathcal{C}}
} \mathbf{x}^{\boldsymbol{\alpha}}.
\]
Let $\ba_i$, $i=1,\dots,\dim\mathcal{C}$, be
the primitive quasi-generators of a simplicial cone $\mathcal{C}$. The set
\[
D_\mathcal{C}:=\left\{ \boldsymbol{\lambda} \in\mathring{\mathcal{C}}\, \middle|\ \boldsymbol{\lambda} = \sum_{i=1}^{\dim\mathcal{C}} \lambda_i \ba_i \quad \text{with}\ \lambda_i\in(0,1]
 \right\},
\]
is finite. Then,
\[
\Phi_\mathcal{C}(\mathbf{x})=P_{\mathcal{C}}(\mathbf{x})\prod_{i=1}^{\dim\mathcal{C}} (1 - \mathbf{x}^{\ba_i})^{-1}\text{ where }P_{\mathcal{C}}(\mathbf{x}):=\sum_{\boldsymbol{\beta} \in D_\mathcal{C}}\mathbf{x}^{\boldsymbol{\beta}}.
\]

\begin{remark}\label{lema:conos}
The cones $\sigma^+$ and $\rho$ are simplicial.
A system of quasi-generators of $\rho$ is given by
\[
\mathbf{v}_i:=\frac{\ex\mathbf{e}_i + N_i\mathbf{e}_z}{e_i},
i\in I,\quad
e_i:=\gcd(\ex,N_i);
\]
adding $\mathbf{e}_z$ we obtain a system of quasi generators of $\sigma^+$.
Then,
\[
\Phi_{\rho}(\mathbf{x}) = P_\rho(\mathbf{x})
\prod_{k\in I} P_k(x_k, z)^{-1},\qquad
\Phi_{\sigma^+}(x) = P_{\sigma^+}(\mathbf{x})
P_z(z)^{-1}\prod_{k\in I} P_k(x_k,z)^{-1},
\]
where
\[
P_k(x_k, z):=1 - \left(x_k^{\ex}z^{N_k}\right)^{\frac{1}{e_k}},\ k\in I,\quad P_z(z)=1-z.
\]
In general, the cone $\sigma^-$ is not simplicial but we have
\begin{equation*}
\Phi_{\sigma^-}=\Phi_{\RR^{I_z}_{\geq 0}} -\Phi_{\sigma^+} -\Phi_{\rho},\qquad
\Phi_{\RR^{I_z}_{\geq 0}}=(1-z)^{-1}
\prod_{i\in I} (1 - x_i)^{-1}.
\end{equation*}
\end{remark}

For  $\ba,\mathbf{b}\in\ZZ_{\geq 0}^{I}$, we denote
$\LL^{\ba} T^{\mathbf{b}}:=(\LL^{-{a}_k}T^{{b}_k}\mid k\in I)$.
We may replace $I$ by $I_z$ if necessary.

Then next proposition expresses $\str^{\bullet}$ in terms of the generating functions. Beware the generating functions are factorized in order to simplify the Euler specialization in \S \ref{sec:top_zeta_binomials}.

\begin{prop}\label{prop:W}
Using the generating functions of the cones from Remark{\rm~\ref{lema:conos}}, the terms $\str^{\bullet}$ from \eqref{eq:W} are expressed as follows:
\begin{align*}
\str^{\sigma^+\!\!}{}&=
P_{\sigma^+}\left(\mathbf{x}=\LL^{-\boldsymbol{\nu}}T^{\mathbf{N}}\right)
K(\LL, T)
H(\LL, T),
\\
\str^{\sigma^-\!\!}{}&=
\tilde{K}(\LL, T)
\prod_{k\in I}\frac{\LL-1}{1-\LL^{-\nu_k}}-P_{\sigma^+}(\mathbf{x}=\LL^{-\boldsymbol{\nu}}T^{(\ex + p)\mathbf{e}_z})
\tilde{K}(\LL, T)
\tilde{H}(\LL, T)\\
&-(\LL-1)
P_{\rho}(\mathbf{x}=\LL^{-\boldsymbol{\nu}}T^{(\ex + p)\mathbf{e}_z})
\tilde{H}(\LL, T),
\\
\str^{\rho}&=
(\LL - 1 - e_I)P_{\rho}\left(\mathbf{x}=
\LL^{-\boldsymbol{\nu}}T^{\mathbf{N}}\right)
H(\LL, T),
\\
\str^{\rho^\ast\!\!}{}&=
e_I\LL^{-1} T
 P_{\rho}\left(\mathbf{x}=
\LL^{-\boldsymbol{\nu}}T^{\mathbf{N}}\right)
\frac{\LL - 1}{1-\LL^{-1} T}
H(\LL, T),
\end{align*}
where
\[
H(\LL, T):=\prod_{k\in I}\frac{\LL - 1}{P_k\left(\LL^{-\nu_k} T^{N_k}, \LL^{-\nu_z} T^{p}\right)},\quad
\tilde{H}(\LL, T):=\prod_{k\in I}\frac{\LL-1}{P_k(\LL^{-\nu_k},\LL^{-\nu_z}T^{\ex + p})},
\]
and
\[
K(\LL, T):=\frac{\LL - 1}{1-\LL^{-\nu_z} T^{p}},\qquad
\tilde{K}(\LL, T):=\frac{\LL - 1}{1-\LL^{-\nu_z} T^{\ex + p}}.
\]
\end{prop}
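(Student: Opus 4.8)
The plan is to take as starting point the four-term splitting \eqref{eq:W} together with the expansions of the $\str^{\bullet}$ as sums over lattice points of the cones displayed just before Lemma~\ref{lema:b}, to substitute into them the monomial expressions for the $\LL$-normalised classes $[\mathcal{X}^{\bullet}_{n_{\mathbf{b}},m_{\mathbf{b}}}]\LL^{-n_{\mathbf{b}}(\abs I+1)-m_{\mathbf{b}}}$ provided by that lemma, and to recognise the resulting series in $\LL$ and $T$ as the generating functions $\Phi_{\sigma^+},\Phi_{\rho},\Phi_{\sigma^-}$ evaluated at suitable monomial specialisations of their variables. The factorised form in the statement then follows from the factorisations of $\Phi_{\sigma^+},\Phi_\rho,\Phi_{\sigma^-}$ recorded in Remark~\ref{lema:conos}, after distributing the powers of $(\LL-1)$.

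First I would treat the three cases attached to \emph{simplicial} cones. On $\sigma^+$ one has $n_{\mathbf{b}}=\langle\mathbf{b},\mathbf{N}\rangle$, so Lemma~\ref{lema:b}(1) makes the summand indexed by $\mathbf{b}$ equal to $(\LL-1)^{\abs I+1}\LL^{-\langle\mathbf{b},\boldsymbol{\nu}\rangle}T^{\langle\mathbf{b},\mathbf{N}\rangle}=(\LL-1)^{\abs I+1}\mathbf{x}^{\mathbf{b}}$ with $x_k:=\LL^{-\nu_k}T^{N_k}$ for $k\in I_z$ (here $N_z=p$, so the variable attached to $\mathbf{e}_z$ is set to $\LL^{-\nu_z}T^{p}$); summing over $\mathbf{b}\in\sigma^+\cap\NN^{I_z}$ identifies $\str^{\sigma^+}$ with $(\LL-1)^{\abs I+1}\Phi_{\sigma^+}$ under this specialisation. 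Substituting the factorisation $\Phi_{\sigma^+}=P_{\sigma^+}\,P_z(z)^{-1}\prod_{k\in I}P_k(x_k,z)^{-1}$ and writing $(\LL-1)^{\abs I+1}=(\LL-1)\prod_{k\in I}(\LL-1)$ produces exactly the factor $K(\LL,T)$ (out of $P_z(z)=1-z$) and the factor $H(\LL,T)$ (out of the $P_k$). The cone $\rho$ is handled the same way, again with $n_{\mathbf{b}}=\langle\mathbf{b},\mathbf{N}\rangle$ but with Lemma~\ref{lema:b}(2) contributing the scalar $(\LL-1)^{\abs I}(\LL-1-e_I)$, and $\rho^\ast$ is the same computation based on Lemma~\ref{lema:b}(3) together with the geometric sum $\sum_{i\ge1}\LL^{-i}T^{i}=\LL^{-1}T\,(1-\LL^{-1}T)^{-1}$, which contributes the extra factor $e_I\LL^{-1}T\,\tfrac{\LL-1}{1-\LL^{-1}T}$ multiplying $P_{\rho}(\mathbf{x}=\LL^{-\boldsymbol{\nu}}T^{\mathbf{N}})\,H(\LL,T)$.

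The case $\sigma^-$ is where I expect the real difficulty, for two separate reasons. First, $\sigma^-$ is not simplicial, so in place of a direct factorisation one must use the identity $\Phi_{\sigma^-}=\Phi_{\RR^{I_z}_{\geq0}}-\Phi_{\sigma^+}-\Phi_\rho$ of Remark~\ref{lema:conos}. Second, and more importantly, on $\sigma^-$ the relevant exponent is $n_{\mathbf{b}}=(\ex+p)b_z$ and \emph{not} $\langle\mathbf{b},\mathbf{N}\rangle$; hence by Lemma~\ref{lema:b}(1) the summand is $(\LL-1)^{\abs I+1}\LL^{-\langle\mathbf{b},\boldsymbol{\nu}\rangle}T^{(\ex+p)b_z}=(\LL-1)^{\abs I+1}\mathbf{x}^{\mathbf{b}}$ for the \emph{different} specialisation $x_k:=\LL^{-\nu_k}$ ($k\in I$) and $z:=\LL^{-\nu_z}T^{\ex+p}$. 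Under this specialisation, $\Phi_{\RR^{I_z}_{\geq0}}=(1-z)^{-1}\prod_{i\in I}(1-x_i)^{-1}$ gives, after distributing $(\LL-1)^{\abs I+1}$, the first summand $\tilde K(\LL,T)\prod_{k\in I}\tfrac{\LL-1}{1-\LL^{-\nu_k}}$; the factorisations of $\Phi_{\sigma^+}$ and $\Phi_\rho$ give the other two summands, once one observes that $P_k(\LL^{-\nu_k},\LL^{-\nu_z}T^{\ex+p})$ are the denominators of $\tilde H(\LL,T)$, that $1-\LL^{-\nu_z}T^{\ex+p}$ is the denominator of $\tilde K(\LL,T)$, and that the polynomial parts become $P_{\sigma^+}(\mathbf{x}=\LL^{-\boldsymbol{\nu}}T^{(\ex+p)\mathbf{e}_z})$ and $P_{\rho}(\mathbf{x}=\LL^{-\boldsymbol{\nu}}T^{(\ex+p)\mathbf{e}_z})$. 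Apart from the bookkeeping of $(\LL-1)$-powers, the one point that deserves care in every case is the verification that each intersection $\bullet\cap\NN^{I_z}$ agrees with the set of (relative) interior lattice points of the cone $\bullet$ that enters $\Phi_\bullet$; this is precisely where the standing hypothesis that the arcs of $\arco$ avoid the coordinate hyperplanes is used, since it forces every coordinate $b_k\ge1$, and one then matches this with the quasi-generators exhibited in Remark~\ref{lema:conos}.
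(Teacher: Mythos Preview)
Your proposal is correct and follows essentially the same route as the paper's proof: substitute the monomial expressions from Lemma~\ref{lema:b} into the definition of each $\str^\bullet$, recognise the resulting sums as $\Phi_{\sigma^+}$, $\Phi_\rho$, or $\Phi_{\sigma^-}$ at the appropriate monomial specialisation (including the different specialisation $\mathbf{x}=\LL^{-\boldsymbol{\nu}}T^{(\ex+p)\mathbf{e}_z}$ for $\sigma^-$ that you correctly single out), and then invoke the factorisations of Remark~\ref{lema:conos}. Your explicit comment that $\bullet\cap\NN^{I_z}$ coincides with the interior lattice points because the arcs of $\arco$ avoid the coordinate hyperplanes is a clarification the paper leaves implicit; otherwise the arguments are the same.
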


\begin{proof}
We combine the definition of $\str^\bullet$, with Lemma~\ref{lema:b} and the generating functions.
Let us start with $\sigma^+$:
\[
\str^{\sigma^+} =  (\LL-1)^{\abs{I} + 1}\sum_{ \mathbf{b} \in \sigma^+  \cap \NN^{I_z}} \LL^{-\langle\mathbf{b},\boldsymbol{\nu}\rangle}T^{\langle\mathbf{b},\mathbf{N}\rangle} = (\LL-1)^{\abs{I} + 1}\Phi_{\sigma^+}(\mathbf{x}=\LL^{-\boldsymbol{\nu}}T^{\mathbf{N}}).
\]
Using the expression for $\Phi_{\sigma^+}$, we obtain the result in the statement.
For $\sigma^-$ we have
\begin{align*}
\str^{\sigma^-} = (\LL-1)^{\abs{I} + 1} \sum_{ \mathbf{b} \in \sigma^-  \cap \NN^{I_z}}
\LL^{- \langle\mathbf{b},\boldsymbol{\nu}\rangle}
T^{(\ex + p)b_{z}}
=
(\LL-1)^{\abs{I} + 1}\Phi_{\sigma^-}(\mathbf{x}=\LL^{-\boldsymbol{\nu}}T^{(\ex + p)\mathbf{e}_z}).
\end{align*}
and the statement follows.
For $\rho$ we have:
\[
\str^\rho = (\LL-1)^{\abs{I}}(\LL-1 - e_I) \sum_{ \mathbf{b} \in \rho  \cap \NN^{I_z}} \LL^{- \langle\mathbf{b},\boldsymbol{\nu}\rangle}T^{\langle\mathbf{b},\mathbf{N}\rangle}=
 (\LL-1)^{\abs{I}}(\LL-1 - e_I) \Phi_{\rho}(\mathbf{x}=\LL^{-\boldsymbol{\nu}}T^{\mathbf{N}}).
\]
Finally, for $\rho^\ast$:
\[
\str^{\rho^\ast} = e_I(\LL - 1)^{\abs{I}+1}\sum_{i=1}^{\infty}  \LL^{-i} T^i\cdot\sum_{ \mathbf{b} \in \rho  \cap \NN^{I_z}}
 \LL^{-\langle\mathbf{b},\boldsymbol{\nu}\rangle} T^{\langle\mathbf{b},\mathbf{N}\rangle}\!=\!
e_I(\LL - 1)^{\abs{I}+1}
\frac{\LL^{-1} T}{1-\LL^{-1} T}\Phi_{\rho}(\mathbf{x}=\LL^{-\boldsymbol{\nu}}T^{\mathbf{N}}),
\]
and the result follows.
\end{proof}

\begin{remark} For the summands of $\Zmot{}(g,\omega, T)_\zero$  one obtains
\[
(\LL-1 )\widetilde{\str}^{\sigma^+}=[\mu_{\gcd(N_I, \ex-p)}]{\str}^{\sigma^+}, \quad (\LL-1 )\widetilde{\str}^{\sigma^-}=[\mu_{\ex+p}]{\str}^{\sigma^+}, \quad (\LL-1 )\widetilde{\str}^{\rho}={\str}^{\rho}
\]
and
\[
\widetilde{\str}^{\rho^\ast} =[ \{(y,z)\in(\CC^*)^2\mid z^\ex - y^{N_I}=1\} ] \cdot P_{\rho}\left(\mathbf{x}=
\LL^{-\boldsymbol{\nu}}T^{\mathbf{N}}\right)
\frac{\LL - 1}{1-\LL^{-1} T}
H(\LL, T).
\]
\end{remark}

\section{Topological Zeta functions for certain binomials}
\label{sec:top_zeta_binomials}

This section is devoted to compute the Euler specialization of the naive and motivic zeta functions computed in \S\ref{sec:motivic_zeta_binomials}.

Recall that the topological zeta function
is obtained from the naive zeta function by specializing with the
Euler characteristic, see \eqref{eq:zetatop}. We define  $\str^{\bullet}_{\topo}$ as $\chi_{\topo} (\str^{\bullet}(T=\LL^{-s}))$
for the terms $\str^{\bullet}$ of \eqref{eq:W}. Notice that
\begin{equation*}
\Ztop{}(g,\omega,s)_\zero= \str^{\sigma^+}_{\topo} + \str^{\sigma^-}_{\topo} + \str^{\rho}_{\topo} + \str^{\rho^\ast}_{\topo}
\end{equation*}
\begin{lemma}\label{aportop}
Consider $g(\mathbf{\boldsymbol{x}},z)= z^p (z^\ex - \mathbf{x}^{\mathbf{N}_I})$, and
$\omega = \mathbf{\boldsymbol{x}}^{\boldsymbol{\nu}} z^{\nu_z} \frac{\dif \mathbf{\boldsymbol{x}}}{\mathbf{x}} \frac{\dif z}{z}$ as in \eqref{eq:g} and \eqref{eq:omega}.
The  terms   $\str^{\bullet}_{\topo}$ of the topological
zeta function $\Ztop{}(g,\omega,s)_\zero$ are expressed in terms of $r :=\frac{(\ex+p)s+ \nu_z}{\ex}$ as follows:
\begin{align*}
\str^{\sigma^+}_{\topo}&=
\frac{1}{\nu_z + p s}
\prod_{k\in I}
\frac{1}{
N_k r + {\nu}_k},\\
\str^{\sigma^-}_{\topo}&=
\frac{1}{\ex r}
\left(\prod_{k\in I} \frac{1}{ {\nu}_k} -
\prod_{k\in I}
\frac{1}{N_k r + {\nu}_k} \right),
\\
\str^{\rho}_{\topo}&=
\frac{-  e_I^2 }{\ex}
\prod_{k\in I} \frac{1}{N_k r + {\nu}_k},
\\
\str^{\rho^\ast}_{\topo}&=
\frac{1}{s+1}\termino{\rho}{\topo}{g,\omega}.
\end{align*}
\end{lemma}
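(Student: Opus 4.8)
The plan is to start from the closed forms for $\str^{\bullet}$ obtained in Proposition~\ref{prop:W} and apply the Euler characteristic specialization $T = \LL^{-s}$ term by term. The single indispensable computational input is the behavior of $\chi_{\topo}$ on the rational building blocks: $\chi_{\topo}(\LL) = 1$, so $\chi_{\topo}(\LL - 1) = 0$, and a product of the form $(\LL-1)^{\abs{I}+c}$ times an $\abs{I}$-fold product of factors $\frac{1}{P_k(\cdots)}$ is an indeterminate $\tfrac{0}{0}$ at $\LL = 1$ that must be resolved via the standard Denef--Loeser limit $\chi_{\topo}\!\left(\frac{\LL-1}{1-\LL^{a}T^{b}}\right) = \frac{-1}{as+b}$ after setting $T=\LL^{-s}$, together with $\chi_{\topo}\!\left(\frac{\LL-1}{1-\LL^{a}T^{b}}\Big|_{T=\LL^{-s}}\right)$ extended multiplicatively over the factors appearing in $H$, $\tilde H$, $K$, $\tilde K$. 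Concretely, $P_k(\LL^{-\nu_k}T^{N_k},\LL^{-\nu_z}T^p) = 1 - \left(\LL^{-(\ex\nu_k + N_k\nu_z)}T^{\ex N_k + N_k p}\right)^{1/e_k}$, and under $T = \LL^{-s}$ its argument becomes $\LL$ to the power $-\tfrac{N_k}{e_k}\big((\ex+p)s + \nu_z\big) + \tfrac{\ex N_k}{e_k}\cdot 0$; one checks that $\frac{\LL-1}{P_k(\cdots)}$ specializes to $\frac{-e_k}{N_k\big((\ex+p)s+\nu_z\big) - \ex\nu_k\cdot(\text{something})}$, which after factoring $\ex$ out is exactly $\frac{-1}{N_k r + \nu_k}$ up to the sign and a factor $e_k$; the product over $k\in I$ of these $e_k$ combines with $P_\rho$ or $P_{\sigma^+}$ evaluated at $T=\LL^{-s}$ to give integers counting lattice points, which is where the $e_I^2$ and $\ex$ denominators come from.

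The key steps, in order, are: (1) record the specialization lemma for the atomic factor $\frac{\LL-1}{1 - \LL^{a}T^{b}}\big|_{T=\LL^{-s}} \mapsto \frac{-1}{as + b}$ and note that $\chi_{\topo}$ is a ring homomorphism so it distributes over the products in Proposition~\ref{prop:W}; (2) compute $\chi_{\topo}(H(\LL,\LL^{-s}))$: each factor $\frac{\LL-1}{P_k}$ contributes $\frac{-e_k}{N_k r + \nu_k}$ after introducing $r = \frac{(\ex+p)s+\nu_z}{\ex}$ and clearing $\ex$, so $\chi_{\topo}(H) = \prod_{k\in I}\frac{-e_k}{N_k r + \nu_k}$, and similarly $\chi_{\topo}(\tilde H) = \prod_{k\in I}\frac{-e_k}{\nu_k}$ (here $T^{N_k}$ is replaced by $T^{0}$-type evaluation so the monomial reduces to $\LL^{-\nu_k}$ giving $\frac{-e_k}{\nu_k}$); (3) compute $\chi_{\topo}(K) = \frac{-1}{ps + \nu_z}$ and $\chi_{\topo}(\tilde K) = \frac{-1}{(\ex+p)s + \nu_z} = \frac{-1}{\ex r}$; (4) evaluate the specializations of the polynomials $P_{\sigma^+}$, $P_\rho$, $P_k$ at $\mathbf{x} = \LL^{-\boldsymbol{\nu}}T^{\mathbf{N}}$ or $\mathbf{x}=\LL^{-\boldsymbol{\nu}}T^{(\ex+p)\mathbf{e}_z}$, observing that at $\LL = 1$ (hence $\mathbf{x}$ evaluating to $1$ in each coordinate) one gets $P_\rho(1,\dots,1) = \abs{D_\rho}$ and $P_{\sigma^+}(1,\dots,1) = \abs{D_{\sigma^+}}$, and the relevant fact is $P_k(1,1) = 0$; then $\abs{D_\rho} = \frac{\prod_{k\in I}(\ex/e_k)}{\ex/e_I}\cdot(\text{adjustment})$ so that the net numerical constant multiplying $\chi_{\topo}(H)$ in $\str^\rho_{\topo}$ is $\frac{-e_I^2}{\ex}\prod_{k}\frac{1}{(\cdots)}$ once the signs $(-e_k)$ from $H$ cancel against the lattice-point count; (5) assemble the four results, being careful with the $\sigma^-$ term where three pieces must be combined and where the first summand $\tilde K \prod_k \frac{\LL-1}{1-\LL^{-\nu_k}}$ specializes to $\frac{1}{\ex r}\prod_{k\in I}\frac{1}{\nu_k}$ (after the signs $(-1)^{\abs{I}+1}$ cancel) and the other two recombine using $P_{\sigma^+}(\cdots)/P_z$-type identities to subtract off $\frac{1}{\ex r}\prod_k \frac{1}{N_k r + \nu_k}$; (6) identify the $\str^{\rho^\ast}$ term: since $\frac{\LL-1}{1-\LL^{-1}T}\big|_{T=\LL^{-s}}$ specializes to $\frac{-1}{s+1}$ and $e_I\LL^{-1}T \mapsto 1$ under $\chi_{\topo}$ (as $\chi_{\topo}(\LL)=1$), while $e_I\cdot\frac{-1}{s+1}\cdot\big(\text{stuff specializing to }\str^\rho_{\topo}/(-e_I)\big)$... more precisely the factor $e_I\LL^{-1}T\cdot\frac{\LL-1}{1-\LL^{-1}T}\cdot P_\rho(\cdots)H(\cdots)$ differs from $\str^\rho_{\topo}$'s building block only by the ratio $\frac{e_I}{\LL-1-e_I}\cdot\frac{\LL-1}{1-\LL^{-1}T}\cdot\LL^{-1}T$, whose $\chi_{\topo}$-limit is $\frac{1}{s+1}\cdot\frac{1}{1}$ normalized so that $\str^{\rho^\ast}_{\topo} = \frac{1}{s+1}\termino{\rho}{\topo}{g,\omega}$, where $\termino{\rho}{\topo}{g,\omega} := \str^\rho_{\topo}$; one should double-check the cancellation of the constant $(\LL - 1 - e_I)$ against $e_I$ via the identity that both, after dividing by the lattice-point normalization, give the same $\prod_k \frac{1}{N_k r + \nu_k}$ shape.

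The main obstacle I expect is step (4): correctly pinning down the \emph{numerical} constants $P_\rho(\mathbf{1})$, $P_{\sigma^+}(\mathbf{1})$ and reconciling them with $e_I$ and the product $\prod_{k\in I} e_k$. The cones $\rho$ and $\sigma^+$ are simplicial with quasi-generators $\mathbf{v}_i = \frac{\ex\mathbf{e}_i + N_i\mathbf{e}_z}{e_i}$ (plus $\mathbf{e}_z$ for $\sigma^+$), so $\abs{D_\rho}$ is the number of lattice points in the half-open parallelepiped they span, which equals the index of the sublattice they generate; a careful computation (using $e_I = \gcd(\ex, \gcd\mathbf{N}_I)$ and the $e_k$'s) must show that the product $\abs{D_\rho}\cdot\prod_{k\in I}(-e_k)\cdot(\LL-1-e_I)\big|_{\LL=1}$, divided by the normalization built into $H$, collapses to exactly $-e_I^2/\ex$, and likewise for $\sigma^+$. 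A clean way to sidestep a brute-force count is to use the identity \eqref{DLz(0)=1} (the normalization $\Ztop{}(g,0)_\zero = 1$ applied in a toric resolution of $g$) or the explicit generating-function factorizations from Remark~\ref{lema:conos} together with the known Euler characteristics of the toric pieces $V(g_0)$ and its complement recorded just before Lemma~\ref{lema:b} — specifically that $V(g_0)$ is $e_I$ copies of $(\CC^*)^{\abs I}$ hence has $\chi_{\topo} = 0$, so the constant terms are forced. Everything else (steps 1--3, 5--6) is bookkeeping once the sign conventions $(-1)^{\abs I + c}$ and the substitution $r = \frac{(\ex+p)s+\nu_z}{\ex}$ are fixed at the outset.
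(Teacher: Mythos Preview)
Your overall strategy --- specialize the factored expressions of Proposition~\ref{prop:W} via the Euler characteristic, using the atomic rule for $\frac{\LL-1}{1-\LL^{-a}}$ and the fact that $\chi_{\topo}(P_\bullet(\mathbf{1})) = \abs{D_\bullet}$ --- is exactly the paper's approach. However, several of your intermediate computations are wrong in ways that would derail the argument.

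First, your value $\chi_{\topo}(H) = \prod_{k\in I}\frac{-e_k}{N_k r+\nu_k}$ drops a factor of $\ex$ from each term: the correct value is $\prod_{k\in I}\frac{e_k}{\ex(N_k r+\nu_k)}$ (and there is no minus sign, since $\chi_{\topo}\bigl(\frac{\LL-1}{1-\LL^{-a}}\bigr)=\frac{1}{a}$). That denominator $\ex^{\abs{I}}$ is precisely what cancels against the cone multiplicities $\abs{D_{\sigma^+}} = \ex^{\abs{I}}\prod_k e_k^{-1}$ and $\abs{D_\rho} = \ex^{\abs{I}-1} e_I\prod_k e_k^{-1}$, so without it the constants cannot collapse to $1$ and $-e_I^2/\ex$. (Incidentally, your tentative guess $\abs{D_\rho}=\frac{\prod_k(\ex/e_k)}{\ex/e_I}$ is already correct without any ``adjustment''; the paper obtains it as a gcd of maximal minors.)

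Second, your claim $\chi_{\topo}(\tilde H) = \prod_k\frac{-e_k}{\nu_k}$ rests on a misreading of $\tilde H$. In $P_k(\LL^{-\nu_k}, \LL^{-\nu_z}T^{\ex+p})$ the second argument still carries~$T$, and since $P_k(x_k,z) = 1 - (x_k^\ex z^{N_k})^{1/e_k}$ depends on~$z$, the $T$-dependence survives. A direct check gives $\chi_{\topo}(\tilde H) = \chi_{\topo}(H)$; this is why the second summand of $\str^{\sigma^-}$ contributes exactly $-\frac{1}{\ex r}\prod_k\frac{1}{N_k r+\nu_k}$.

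Third, in $\str^{\sigma^-}$ you overlook that the third summand $(\LL-1)P_\rho(\cdots)\tilde H$ carries an \emph{extra} unpaired factor of $(\LL-1)$, so its Euler characteristic is simply~$0$. There is no recombination needed; the formula for $\str^{\sigma^-}_{\topo}$ comes directly from the first two summands.

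Once these three points are fixed the rest is, as you say, bookkeeping.
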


\begin{remark}\label{rt}Notice that if $p=0$ and $\nu_z=1$ then $r$ becomes $t := s + \frac{1}{\ex}$.\end{remark}

\begin{proof} Recall that the image of the  topological Euler specialization morphism $\chi: {\mathcal{M}}_\CC \rightarrow \ZZ$ can be recovered from the following two facts. First, since  $\chi$ is additive, $\chi(\LL)=1$. Note that for $a\in\NN$,
\[
[\PP^a]= 1 + \LL + \LL^2 + \cdots + \LL^a=
\frac{1-\LL^{a+1}}{1-\LL}\in \KVarC{}\Longrightarrow\chi([\PP^a])= a +1.
\]
Secondly, if we allow
\emph{formal dimensions}
we have that
\[
\chi \left (  \frac{1 - \LL}{ 1- \LL^{a+bs}} \right ) = \chi \left ( \frac{1}{[\PP^{a + bs -1}]} \right ) =\frac{1}{a+bs}.
\]
As a consequence,
\[
\chi(K(\LL, T))=\frac{1}{\nu_z + p s},\quad
\chi(\tilde{K}(\LL, T))=\frac{1}{\nu_z + (\ex + p) s}=\frac{\ex}{r}.
\]
On the other side, the Euler characteristics of the terms $P_\bullet$ appearing in
Proposition~\ref{prop:W} coincide with $\abs{D_\bullet}$ (the \emph{multiplicity} of the cone).
In Remark~\ref{lema:conos} we described the primitive
quasi-generators of the $\abs{I}+1$-dimensional simplicial cone $\sigma^+$ and
the $\abs{I}$-dimensional simplicial cone $\rho$.
The multiplicity of $\sigma^+_I$ equals the absolute
value of the determinant of the matrix formed by these
quasi-generators, i.e.,
\begin{equation*}
\ex^{\abs{I}}\displaystyle\prod_{k\in I} e_k^{-1}.
\end{equation*}
The multiplicity of  $\rho$ it is the $\gcd$ of the $\abs{I}$-minors of the matrix of the generators
which turns out to be
\begin{equation*}
\ex^{\abs{I}-1} e_I\displaystyle\prod_{k\in I} e_{k}^{-1}
\end{equation*}
The Euler characteristic of the factor of $\tilde{H}(\LL, T)$ for $k\in I$ is
\[
\chi\left(\frac{\LL-1}{1-\LL^{-\frac{(\ex\nu_k + \nu_z N_k)+s(\ex + p) N_k}{e_k}}}\right)=
\frac{e_k}{(\ex\nu_k + \nu_z  N _k)+s(\ex + p) N_k}=\frac{e_k}{\ex (N_kr + \nu_k)}
\]
and it coincides with the Euler characteristic of the corresponding factor in $H(\LL, T)$.
Combining all these facts the result is proved.
\end{proof}

Recall also that the local twisted topological zeta function
is obtained from the motivic one by specializing with the equivariant Euler characteristic, see~\eqref{eq:zetatop}.
As before, we define  the terms $\widetilde{\str}^{\bullet}_{\topo}$ as $\chi_{\topo} ((\LL-1) \widetilde{\str}^{\bullet}(T=\LL^{-s}), \alpha)$, with $\alpha$ a character of order $l$,
for the terms $\widetilde{\str}^{\bullet}$ introduced in Remark \ref{Wmot}. Notice that
\begin{equation}\label{eq:tzetatopW}
\Ztop{(l)}(g,\omega,s)_\zero= \widetilde{\str}^{\sigma^+}_{\topo} + \widetilde{\str}^{\sigma^-}_{\topo} + \widetilde{\str}^{\rho}_{\topo} + \widetilde{\str}^{\rho^\ast}_{\topo}
\end{equation}
The following proposition is needed to compute the equivariant Euler  specializations.
For $\bullet\in\{\sigma^+, \sigma^-, \rho\}$ let us define
\[
\mathcal{N}(\bullet):=\gcd
\left\{
\min\left(\langle \ba , (\ex + p) \mathbf{e}_{z}\rangle , \left\langle \ba, \mathbf{N}
\right\rangle  \right)
\middle| \,  \ba \in 	\bullet\cap\ZZ_{> 0}^{I_z} \right\}.
\]

\begin{prop}\label{prop:Nbullet}
	The numbers $ \mathcal{N}(\bullet)$ are given by
\[
\mathcal{N}(\bullet) =
\begin{cases}
\gcd\mathbf{N} & \text{if } \bullet =  \sigma^+\\
\ex + p& \text{if } \bullet= \sigma^-\\
\lcm (\gcd\mathbf{N}_I, \ex)\dfrac{\ex + p}{\ex} & \text{if } \bullet= \rho.
\end{cases}
\]
\end{prop}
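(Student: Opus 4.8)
The plan is to treat the three cones $\sigma^+$, $\sigma^-$, $\rho$ separately, in each case first identifying which of the two arguments of the $\min$ is the smaller one, and then computing the resulting gcd by an elementary number-theoretic argument. The starting observation is that the sign conditions defining the cones resolve the $\min$: if $\ba\in\sigma^+$ then $\langle\ba,\mathbf{N}\rangle<(\ex+p)a_z=\langle\ba,(\ex+p)\mathbf{e}_z\rangle$, so the minimum is $\langle\ba,\mathbf{N}\rangle$; if $\ba\in\sigma^-$ it is $(\ex+p)a_z$; and if $\ba\in\rho$ the two coincide and equal $(\ex+p)a_z=\langle\ba,\mathbf{N}\rangle$. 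Hence, writing all gcd's over $\ba\in\bullet\cap\ZZ_{>0}^{I_z}$,
\[
\mathcal{N}(\sigma^+)=\gcd\langle\ba,\mathbf{N}\rangle,\qquad
\mathcal{N}(\sigma^-)=(\ex+p)\gcd(a_z),\qquad
\mathcal{N}(\rho)=(\ex+p)\gcd(a_z).
\]

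For $\sigma^+$, one inclusion is immediate, since $\gcd\mathbf{N}$ divides every $\langle\ba,\mathbf{N}\rangle$. For the reverse I would fix a base point $\ba^{0}\in\sigma^+\cap\ZZ_{>0}^{I_z}$ with $a^0_k=1$ for $k\in I$ and $a^0_z=M$ for $M$ large (the defining inequality becomes $\sum_{k\in I}N_k<\ex M$); since there are only finitely many of them, after enlarging $M$ the points $\ba^{0}+\mathbf{e}_j$ for $j\in I_z$ still lie in $\sigma^+\cap\ZZ_{>0}^{I_z}$, and $\langle\ba^{0}+\mathbf{e}_j,\mathbf{N}\rangle-\langle\ba^{0},\mathbf{N}\rangle=N_j$. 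Thus $\mathcal{N}(\sigma^+)$ divides $\gcd(\langle\ba^{0},\mathbf{N}\rangle,(N_j)_{j\in I_z})=\gcd(N_j\mid j\in I_z)=\gcd\mathbf{N}$, the last step because $\langle\ba^{0},\mathbf{N}\rangle$ is an integral combination of the $N_j$; together with the previous inclusion this gives $\mathcal{N}(\sigma^+)=\gcd\mathbf{N}$. For $\sigma^-$, every value $(\ex+p)a_z$ is a multiple of $\ex+p$, and $a_z=1$ is realized (take $a_z=1$ and the $a_k$, $k\in I$, large enough that $\sum_{k\in I}a_kN_k>\ex$), so $\mathcal{N}(\sigma^-)=\ex+p$.

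For $\rho$, a point $\ba\in\rho\cap\ZZ_{>0}^{I_z}$ amounts to the choice of $a_z\in\ZZ_{>0}$ together with a solution $\ba_I\in\ZZ_{>0}^{I}$ of $\langle\ba_I,\mathbf{N}_I\rangle=\ex a_z$. Set $d:=\gcd\mathbf{N}_I$ and $e_I:=\gcd(\ex,d)$. As $\gcd(N_k/d\mid k\in I)=1$, the numerical semigroup generated by the $N_k/d$ contains every sufficiently large integer, so the set $\{\langle\ba_I,\mathbf{N}_I\rangle\mid\ba_I\in\ZZ_{>0}^{I}\}$ contains every sufficiently large multiple of $d$ and is contained in $d\ZZ$. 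Consequently $a_z$ is realizable precisely for the sufficiently large $a_z$ with $d\mid\ex a_z$, that is, with $(d/e_I)\mid a_z$; since every realizable $a_z$ is in any case a multiple of $d/e_I$, the gcd of all realizable $a_z$ equals $d/e_I$, so $\mathcal{N}(\rho)=(\ex+p)\,d/e_I$. Rewriting $d/e_I=\lcm(d,\ex)/\ex$ produces the stated expression $\mathcal{N}(\rho)=\lcm(\gcd\mathbf{N}_I,\ex)\,\frac{\ex+p}{\ex}$.

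The only delicate point is the $\rho$ case: one must work with \emph{strictly positive} exponents $a_k$, which calls for a Frobenius-number argument and a uniform bound on the threshold, and one then has to match the asymptotic description of the realizable $a_z$ with their gcd and translate the answer into the $\lcm$ form. The cases $\sigma^+$ and $\sigma^-$ are routine once the $\min$ has been resolved.
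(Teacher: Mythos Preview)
Your proof is correct and follows the same overall strategy as the paper: in each case you first observe that the defining inequality of the cone resolves the $\min$, then establish divisibility in one direction trivially and in the other by exhibiting enough lattice points in the cone. The difference is only in how those lattice points are produced. For $\sigma^+$ the paper fixes $i\in I$ and builds two explicit vectors whose $\mathbf{N}$-values have $\gcd$ equal to $N_i$, then a separate family for $p$; your perturbation argument (base point $\ba^0$ plus $\ba^0+\mathbf{e}_j$ for each $j\in I_z$) accomplishes the same in one stroke. For $\rho$ the paper again writes down explicit vectors and takes $\gcd$'s of their $\mathbf{N}$-values, whereas you reformulate the condition $\langle\ba_I,\mathbf{N}_I\rangle=\ex a_z$ as membership of $\ex a_z$ in the numerical semigroup generated by the $N_k$, which identifies the realizable $a_z$ as all sufficiently large multiples of $d/e_I$ and hence gives their $\gcd$ immediately. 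Your approach is a bit more conceptual and avoids the ad-hoc constructions; the paper's explicit vectors have the minor advantage of not invoking the Frobenius threshold. One small point worth making explicit in your write-up of the $\rho$ case: the sentence ``the gcd of all realizable $a_z$ equals $d/e_I$'' is justified not only because every realizable $a_z$ is a multiple of $d/e_I$, but also because two \emph{consecutive} large multiples of $d/e_I$ are realizable, forcing the $\gcd$ to divide $d/e_I$.
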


\begin{proof}
Let us start with $\sigma^-$. By definition if $\ba \in 	\sigma^-\cap\ZZ_{\geq 0}^{I_z}$,
then
$\langle\ba,\mathbf{N}\rangle
> (\ex + p)a_{z} =\langle \ba , (\ex + p) \mathbf{e}_{z}\rangle$ and then
$\ex + p$ divides $\mathcal{N}(\sigma^-)$.
We have
\[
\ba':=\frac{1}{\abs{\mathbf{N}_I}}\sum_{i\in I} e_i\mathbf{v}_i=
\frac{\ex}{\abs{\mathbf{N}_I}}\sum_{i\in I}\mathbf{e}_i + \mathbf{e}_z,\qquad
\langle\ba',\mathbf{N}\rangle=\langle\ba',(\ex + p)\mathbf{e}_z\rangle=\ex + p.
\]
Let
\[
\ba'':=\left(1+\left\lceil\frac{\ex}{\abs{\mathbf{N}_I}}\right\rceil-\frac{\ex}{\abs{\mathbf{N}_I}}\right)\sum_{i\in I} \mathbf{e}_i,
\qquad \langle\ba'',\mathbf{N}\rangle>0,\quad \langle\ba'',(\ex + p)\mathbf{e}_z\rangle=0.
\]
Then
\[
\ba:=\ba'+\ba''\cap \sigma^- \in\ZZ^I_{>0},\qquad \langle\ba,(\ex + p)\mathbf{e}_z\rangle=\ex + p,
\]
and we deduce that $\mathcal{N}(\sigma^-)=\ex + p$.

Let us continue with $\sigma^+$. By definition if $\ba \in 	\sigma^+\cap\ZZ_{\geq 0}^{I_z}$,
then $\left\langle \ba, \mathbf{N} \right\rangle <\langle \ba , (\ex + p) \mathbf{e}_{z}\rangle$. Since
the coefficients of $\ba$ are integers, then  $\gcd\mathbf{N}$ divides the minimum
and hence also $\mathcal{N}(\sigma^+)$.
Fix $i\in I$, we consider
\[
\ba_1=\mathbf{e}_i+N_i\sum_{j\in I\setminus\{i\}} \mathbf{e}_j + N_i M e_z\in\ZZ_{>0}^{I_z}, \quad
\ba_2=2\mathbf{e}_i+N_i\sum_{j\in I\setminus\{i\}} \mathbf{e}_j + N_i M e_z\in\ZZ_{>0}^{I_z}, \quad
M\gg 0.
\]
Note that $\ba_1,\ba_2\in\sigma^+$ and
\[
\langle\ba_k,\mathbf{N}\rangle=N_i\left(k+\sum_{j\in I\setminus\{i\}} N_j +M p\right)\Longrightarrow
\gcd(\langle\ba_1,\mathbf{N}\rangle,\langle\ba_2,\mathbf{N}\rangle)=N_i.
\]
Then $\mathcal{N}(\sigma^+)$ divides $N_i$, for $i\in I$. Let
\[
\ba_M=p\sum_{j\in I} \mathbf{e}_j + M e_z\in\ZZ_{>0}^{I_z}, \quad
M\gg 0.
\]
We have that $\ba_M\in\sigma^+$ and
$\langle\ba_M,\mathbf{N}\rangle=p(\abs{\mathbf{N}_I} + M)$ and then $\mathcal{N}(\sigma^+)$ divides $pM$
for $M\gg 0$ and then it divides~$p$. We deduce that $\mathcal{N}(\sigma^+)=\gcd\mathbf{N}$.

Finally for $\mathcal{N}(\rho)$, if $\ba\in\rho\cap\NN^{I_z}$, we have that
\[
\sum_{k\in I}a_k N_k + a_z p =a_z(\ex + p)\Longleftrightarrow
\sum_{k\in I}a_k N_k = a_z \ex \Longrightarrow
\mathcal{N}(\rho) \text{ divides } \left(\sum_{k\in I}a_k N_k\right)\frac{\ex + p}{\ex},
\]
and then $\mathcal{N}(\rho)$ divides $\lcm(\gcd\mathbf{N}_I,\ex)\frac{\ex + p}{\ex}$.
Let us fix $i\in I$ and for $k=1,2$, let
\[
\ba_k=\frac{\lcm(\ex,N_i)}{N_i}\left(k\mathbf{e}_i+N_i\sum_{j\in I\setminus\{i\}} \mathbf{e}_j\right)+
\frac{\lcm(\ex,N_i)}{\ex}\left(k+\sum_{j\in I\setminus\{i\}} N_j\right) e_z\in\ZZ_{>0}^{I_z}.
\]
These vectors also verify that $\ba_k\in\rho$ and the common inner product is
\[
\lcm(\ex,N_i)\frac{\ex + p}{p}\left(k+\sum_{j\in I\setminus\{i\}} N_j\right);
\]
their $\gcd$ is $\lcm(\ex,N_i)\frac{\ex + p}{p}$. The $\gcd$ of all these numbers
is the number of the statement and the result follows.
\end{proof}

\begin{cor}\label{twistedaportop} Consider
$g_0(\mathbf{x},z) =  z^\ex - \mathbf{x}^{\mathbf{N}_I}$
and $\omega := \mathbf{x}^{\mathbf{\boldsymbol{\nu}}} z \frac{\dif \mathbf{x}}{\mathbf{x}} \frac{\dif z}{z}$, i.e., as in \eqref{eq:g} and \eqref{eq:omega} with $p=0$ and $\nu_z=1$.
The  terms   $\widetilde{\str}^{\bullet}_{\topo}$ of the topological
zeta function $\Ztop{(e)}(g_0,\omega,s)_\zero$ are expressed in terms of $t :=s + \frac{1}{\ex}$  as follows.
\begin{itemize}
\item $\widetilde{\str}^{\sigma^+}_{\topo} = {\str}^{\sigma^+}_{\topo}$ if $e \mid N_I$ and $0$ otherwise.
\item $\widetilde{\str}^{\sigma^-}_{\topo} = {\str}^{\sigma^-}_{\topo}$ if $e \mid \ex$ and $0$ otherwise.
\item $\widetilde{\str}^{\rho}_{\topo} = {\str}^{\rho}_{\topo}$ if  $e \mid \lcm(\ex, N_I)$ and $0$ otherwise.
\item $\widetilde{\str}^{\rho^\ast}_{\topo} = 0$.
\end{itemize}
\end{cor}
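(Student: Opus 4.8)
The plan is to compute the equivariant Euler characteristic $\chi_{\topo}((\LL-1)\widetilde{\str}^{\bullet}(T=\LL^{-s}), \alpha)$ for each $\bullet \in \{\sigma^+, \sigma^-, \rho, \rho^\ast\}$, using the explicit descriptions of $(\LL-1)\widetilde{\str}^{\bullet}$ recorded in the Remark following Proposition~\ref{prop:W}, together with the combinatorial input of Proposition~\ref{prop:Nbullet}. The key observation is that for $\bullet \in \{\sigma^+, \sigma^-, \rho\}$ the factor $(\LL-1)\widetilde{\str}^{\bullet}$ equals $[\mu_c]\cdot\str^{\bullet}$ (or $\str^{\rho}$ itself, i.e.\ $c=1$, in the case $\bullet=\rho$), where $c = \mathcal{N}(\bullet)$ is exactly the number computed in Proposition~\ref{prop:Nbullet}: indeed $\gcd(N_I, \ex+p)=\gcd N_I = N_I$ when $p=0$ matches $\mathcal{N}(\sigma^+)=\gcd\mathbf{N}=N_I$ (since with $p=0$ one has $N_z=0$, so $\gcd\mathbf{N}=\gcd\mathbf{N}_I$), while $\ex+p=\ex$ matches $\mathcal{N}(\sigma^-)=\ex$, and $\mathcal{N}(\rho) = \lcm(N_I,\ex)\cdot\frac{\ex+p}{\ex} = \lcm(N_I,\ex)$ when $p=0$. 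So the specialization reduces to understanding $\chi_{\topo}([\mu_c]\cdot R(\LL^{-s}), \alpha)$ for $R$ a rational expression, where $\alpha$ has order $e$.

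First I would recall the standard fact (from \cite{DL-JAMS} / the Denef--Loeser equivariant formalism) that $\chi_{\topo}([\mu_c], \alpha) = c$ if $e \mid c$ and $0$ otherwise, where $\mu_c$ carries the natural $\hat\mu$-action; more precisely the equivariant Euler characteristic of $\mu_c$ with respect to the character $\alpha$ of order $e$ picks out whether $\alpha$ factors through the quotient $\hat\mu \to \mu_c$, i.e.\ whether $e\mid c$. Combined with the multiplicativity of $\chi_{\topo}(-,\alpha)$ over products where one factor carries trivial $\hat\mu$-action (here the rational part $R(\LL^{-s})$ is $\hat\mu$-equivariantly trivial), we get $\chi_{\topo}([\mu_c] R(\LL^{-s}), \alpha) = \chi_{\topo}([\mu_c],\alpha)\cdot \chi_{\topo}(R(\LL^{-s}))/c$, and the normalization works out so that the net effect is: the term $\str^{\bullet}_{\topo}$ survives unchanged if $e \mid \mathcal{N}(\bullet)$ and vanishes otherwise. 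This gives the three bullets: $\widetilde{\str}^{\sigma^+}_{\topo}=\str^{\sigma^+}_{\topo}$ iff $e\mid N_I$; $\widetilde{\str}^{\sigma^-}_{\topo}=\str^{\sigma^-}_{\topo}$ iff $e\mid\ex$; $\widetilde{\str}^{\rho}_{\topo}=\str^{\rho}_{\topo}$ iff $e\mid\lcm(\ex,N_I)$.

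For the $\rho^\ast$ term I would argue separately: from the Remark after Proposition~\ref{prop:W} one has that $\widetilde{\str}^{\rho^\ast}$ is proportional to $[\,\{(y,z)\in(\CC^*)^2 \mid z^\ex - y^{N_I}=1\}\,]$ with its $\hat\mu$-action, times a rational factor with a pole coming from $\frac{\LL-1}{1-\LL^{-1}T}$, i.e.\ a factor $\frac{1}{s+1}$ after specialization. The point is that this variety — the Milnor fibre $F_1$ of the binomial $z^\ex - y^{N_I}$ restricted to the torus — is, after the toric change of variables, a disjoint union of copies of a curve on which $\hat\mu$ acts, but its \emph{equivariant} Euler characteristic against any nontrivial character vanishes because such a torus-Milnor-fibre has Euler characteristic zero compatibly with the action; more directly, $\str^{\rho^\ast}_{\topo} = \frac{1}{s+1}\termino{\rho}{\topo}{g,\omega}$ already contains the factor $\frac{1}{s+1}$, and in the twisted setting the corresponding contribution involves $\chi_{\topo}$ of a variety that is an unramified cover and therefore has vanishing equivariant Euler characteristic for the relevant action, giving $\widetilde{\str}^{\rho^\ast}_{\topo}=0$.

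The main obstacle I expect is pinning down the precise normalization in the reduction $\chi_{\topo}([\mu_c]R(\LL^{-s}),\alpha) \rightsquigarrow \str^{\bullet}_{\topo}$ — that is, checking that the $[\mu_c]$ factor, which has Euler characteristic $c$ in the non-equivariant setting and must reproduce $\str^{\bullet}_{\topo}$ (whose $\sigma^+$, $\sigma^-$ expressions carry \emph{no} explicit factor of $N_I$ or $\ex$, while $\str^\rho_{\topo}$ carries $e_I^2/\ex$), is correctly accounted for by the multiplicities of the cones and the $\frac{\LL-1}{1-\LL^{-a-bs}}\mapsto\frac{1}{a+bs}$ rule already used in the proof of Lemma~\ref{aportop}. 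Concretely one must verify that in each case the product of (cone multiplicity) $\times$ (the $c$ from $[\mu_c]$) $\times$ (the specialized rational factors) collapses back to exactly $\str^{\bullet}_{\topo}$, which is a bookkeeping exercise with the $e_k$'s and the identity $\ex^{|I|}\prod e_k^{-1}$ resp.\ $\ex^{|I|-1}e_I\prod e_k^{-1}$ for the multiplicities of $\sigma^+$ and $\rho$; I would carry this out factor by factor, treating $\sigma^+$ first as the model computation and then $\rho$ with the extra $e_I$.
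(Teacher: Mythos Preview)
Your high-level plan matches the paper's (which itself is only two lines): rerun the Euler-specialization computations of Lemma~\ref{aportop} in the equivariant setting, with Proposition~\ref{prop:Nbullet} supplying the arithmetic. But the specific mechanism you propose for how the divisibility conditions arise does not work as written.

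You try to read the divisibility off the prefactor $[\mu_c]$ in $(\LL-1)\widetilde{\str}^{\bullet}=[\mu_c]\cdot\str^{\bullet}$, claiming $c=\mathcal{N}(\bullet)$. This fails in two places. First, for $\sigma^+$ your computation ``$\gcd(N_I,\ex+p)=N_I$ when $p=0$'' is false as stated: the left-hand side is $\gcd(N_I,\ex)=e_I$, which is in general strictly smaller than $N_I=\mathcal{N}(\sigma^+)$. Second, and more damaging, for $\rho$ you yourself record that the Remark after Proposition~\ref{prop:W} gives $c=1$, yet you conclude that the survival condition is $e\mid\lcm(\ex,N_I)$; with your mechanism one would instead get $e\mid 1$. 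So the class $[\mu_c]$ alone cannot be what encodes the divisibility, and your argument is internally inconsistent on the $\rho$ case.

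What Proposition~\ref{prop:Nbullet} is really doing is this: each $\mathbf{b}$-summand of $\widetilde{\str}^{\bullet}$ is a $T^{n_{\mathbf{b}}}$-term whose $\hat\mu$-action factors through $\mu_{n_{\mathbf{b}}}$, so its equivariant Euler characteristic against a character of order $e$ vanishes unless $e\mid n_{\mathbf{b}}$. The number $\mathcal{N}(\bullet)=\gcd\{n_{\mathbf{b}}:\mathbf{b}\in\bullet\cap\NN^{I_z}\}$ is exactly the threshold for $e$ dividing \emph{every} $n_{\mathbf{b}}$. One then has to check, following the rationalization of Lemma~\ref{aportop}, that the surviving sub-sum either reproduces $\str^{\bullet}_{\topo}$ (when $e\mid\mathcal{N}(\bullet)$) or collapses to zero (when it does not); this is not the bookkeeping side issue you relegate to the last paragraph but the actual content of the proof. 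Your $\rho^\ast$ paragraph is too vague to carry the claim --- the assertion that the equivariant Euler characteristic of the torus Milnor fibre ``vanishes for the relevant action'' needs an explicit argument, not an appeal to it being an unramified cover.
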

\begin{proof}
	The proof follows the same lines of the proof of Lemma~\ref{aportop}, taking into account the definition of $\widetilde{\str}^{\bullet}_{\topo}$ and Proposition~\ref{prop:Nbullet}.
\end{proof}

\section{Computation of the local motivic zeta functions of \texorpdfstring{$G$}{G}}
\label{sec:motivic_zeta_susp}

In this section we compute the local naive and motivic zeta functions of $G$ from \S\ref{subsec:general}. For this we
consider two fixed monomial volume forms
\[
\omega_d:=
x_1^{\nu_1}\cdot\ldots\cdot x_d^{\nu_d}
\frac{\dif x_1}{x_1}\dots \frac{\dif x_d}{x_d},\quad \text{and}\quad
\omega_{d+1}:=\omega_dz^{\nu_{z}}
\frac{\dif z}{z},\quad \text{with }
\nu_1,\dots,\nu_d,\nu_z\in\NN.
\]
In order to compute  $\Zmot{}(G,\omega_{d+1}, T)_\zero$ and $\Znv{}(G,\omega_{d+1}, T)_\zero$  we use the following double partition of $\mathcal{L}(\CC^{d+1})_\zero$. In first place we consider the contact of the arcs with the functions $z^\ex$, $f$, and $G$. Then for each $\mathcal{X}_{n,m}$ we define the following subsets:
\begin{align*}
\mathcal{X}_{n,m}^{\sigma^+}&:=
\{\varphi \in \mathcal{X}_{n,m}\mid (\ex+p) \ord_t \varphi_z > \ord_t (z^pf)\circ \varphi= n\}\\
\mathcal{X}_{n,m}^{\sigma^-}&:=
\{\varphi \in \mathcal{X}_{n,m}\mid (\ex+p) \ord_t \varphi_z < \ord_t (z^pf)\circ \varphi = n\}\\
\mathcal{X}_{n,m}^{\rho\hphantom{{}^+}}&:=
\{\varphi \in \mathcal{X}_{n,m}\mid (\ex+p) \ord_t \varphi_z = \ord_t (z^pf)\circ \varphi = n\}\\
\mathcal{X}_{n,m}^{\rho^\ast}\ &:=
\{\varphi \in \mathcal{X}_{n,m}\mid (\ex+p) \ord_t \varphi_z = \ord_t (z^pf)\circ \varphi < n\}.
\end{align*}
These subsets form a partition of $\mathcal{X}_{n,m}$ and induce a decomposition of $\Znv{}(G,\omega_{d+1},T)_\zero$ into four pieces $\Znv{\bullet}(G,\omega_{d+1},T)_\zero$
as in \eqref{eq:zetamot}, i.e.
\begin{equation}\label{eq:globalbullet}
\Znv{}(G,\omega_{d+1}, T)_\zero=\sum_{\bullet\in \{\sigma^+,\sigma^-,\rho,\rho^\ast\}} \Znv{\bullet}(G,\omega_{d+1},T)_\zero.
\end{equation}
Analogously one can make a partition of  $\mathcal{X}^{1}_{n,m}$ and induce a decomposition of $\Zmot{}(G,\omega_{d+1},T)_\zero$.

The \emph{second partition} just takes into account the first $d$ coordinates and depends on the choice of an embedded  resolution $\pi : Y \rightarrow \CC^d$ of $f\omega_d$ as in \S\ref{subsec:dlzf}. The map $\pi\times\id:Y\times\CC\to\CC^{d+1}$ is proper
and birational. Note that $\pi^{-1}(\zero)$ and $(\pi\times\id)^{-1}(\zero)$ can be canonically identified, in particular recall that the irreducible components of $\pi^* f\omega_d$ are $\{E_j\mid j\in J\}$.

For $I\subset J$ and a point $o_I \in E_I^\circ$ we consider a system of coordinates~$\mathcal{U}$ as in \S\ref{subsec:dlzf}
with the two subsets of coordinates; $\mathbf{x}_I$ and $\mathbf{y}$. In
$\mathcal{U}\times\CC$ we add the $z$-variable.
Under this condition,
the pullbacks of $G$ and $\omega_{d+1}$ under $\pi\times\id$ will be denoted by $G_{I_z}$ and $\omega_{I_z}$, they have local equations $z^p(z^\ex-\mathbf{x}_I^{N_I} u(\mathbf{x}, \mathbf{y}))$ and $v\mathbf{x}_I^{\mathbf{\nu}_I}  z^{\nu_{z}}  \frac{\dif \mathbf{x}_I}{\mathbf{x}_I} \dif \mathbf{y} \frac{\dif z}{z}$
for some non-vanishing functions $u, v$ which may depend on all the variables. Note that the vector $\mathbf{\nu}_I$ depends only on the index set $I$ and the initial form $\omega_{d}$, while $\nu_{z}$ is independent of $I$. Since $\ord G_I \circ \varphi$ and $\ord \omega_I \circ \varphi$ are independent of $\ord \varphi_{\mathbf{y}}$, the subset coordinates $\mathbf{y}$ can be omitted and the computation of the contributions of the different strata reduces to \S \ref{sec:motivic_zeta_binomials} and \S \ref{sec:top_zeta_binomials}.

Combining both partitions we arrive at
\begin{equation}\label{eq:doublepart}
\Znv{\bullet}(G,\omega_{d+1},T)_\zero=
\sum_{I \subset J}
\str^{\bullet}_{I}\quad\text{and}\quad  \Zmot{\bullet}(G,\omega_{d+1},T)_\zero=
\sum_{I \subset J}
\widetilde{\str}^{\bullet}_{I}.
\end{equation}

\begin{lemma}\label{lema:globalW}
The  summands $\str^{\bullet}_{I}$ of the naive zeta function can be expressed as
\[\str_I^\bullet = [E_I^\circ]\str^\bullet,
\]
where $\str^\bullet$ are given in Proposition{\rm~\ref{prop:W}}. A similar expression can be given for the motivic zeta function.
\end{lemma}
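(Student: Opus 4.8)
As explained in \S\ref{sec:motivic_zeta_susp}, the summand $\str^\bullet_I$ is, by construction, the part of $\Znv{\bullet}(G,\omega_{d+1},T)_\zero$ obtained, via the proper birational map $\pi\times\id\colon Y\times\CC\to\CC^{d+1}$, from those arcs whose lift is centred on the stratum $E_I^\circ$, and the plan is to evaluate it by the Denef--Loeser change of variables together with the local computations of \S\ref{sec:motivic_zeta_binomials}. The evaluation is local around a point $o_I\in E_I^\circ$, where $G$ and $\omega_{d+1}$ pull back to $G_{I_z}=z^p(z^\ex-\mathbf{x}_I^{\mathbf{N}_I}u)$ and $\omega_{I_z}=v\,\mathbf{x}_I^{\boldsymbol{\nu}_I}z^{\nu_z}\frac{\dif\mathbf{x}_I}{\mathbf{x}_I}\dif\mathbf{y}\frac{\dif z}{z}$ with $u,v$ non-vanishing.

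The first observation is that, since $u,v$ are units, $\ord(u\circ\tilde\varphi)=\ord(v\circ\tilde\varphi)=0$ for every lifted arc $\tilde\varphi$; hence $\ord(G\circ\varphi)$, $\ord(\varphi^{*}\omega_{d+1})$ and the conditions defining $\sigma^{+},\sigma^{-},\rho,\rho^{\ast}$ depend only on $\ord\tilde\varphi_{\mathbf{x}_I}$, $\ord\tilde\varphi_z$ and the angular components $\ac\tilde\varphi_{\mathbf{x}_I}$, $\ac\tilde\varphi_z$, and not on the free coordinates $\tilde\varphi_{\mathbf{y}}$. Moreover these conditions are exactly the ones defining the pieces $\sigma^{+},\sigma^{-},\rho,\rho^{\ast}$ of \S\ref{sec:motivic_zeta_binomials} for the binomial $g$ of \eqref{eq:g} and the form $\omega$ of \eqref{eq:omega}, with index set $I$, with $\mathbf{N}$ having $I$-part $\mathbf{N}_I$ and $N_z=p$, and with $\boldsymbol{\nu}$ having $I$-part $\boldsymbol{\nu}_I$ and $z$-coordinate $\nu_z$; here $\mathbf{b}:=\ord\tilde\varphi\in\NN^{I_z}$ is exactly the vector used there. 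Consequently, for fixed $I$, $\bullet$, $n$ and $m$, an $n$-jet contributing to $\str^\bullet_I$ is prescribed by: the centre of its lift, which ranges over $E_I^\circ$ (that is, over $E_I^\circ\cap\pi^{-1}(\zero)$ in the notation of \S\ref{subsec:dlzf}) and contributes the class $[E_I^\circ]$; the higher-order part of $\tilde\varphi_{\mathbf{y}}$, which is unconstrained and contributes $\LL^{n(d-\abs{I})}$; and the $(\mathbf{x}_I,z)$-part, which runs over the set $\mathcal{X}^\bullet_{n,m}$ attached in \S\ref{sec:motivic_zeta_binomials} to this binomial. Since $n(d+1)=n(\abs{I}+1)+n(d-\abs{I})$, the factor $\LL^{n(d-\abs{I})}$ cancels the normalization $\LL^{-n(d+1)-m}$ down to $\LL^{-n(\abs{I}+1)-m}$, and summing over $n$ and $m$ yields $\str^\bullet_I=[E_I^\circ]\,\str^\bullet$ with $\str^\bullet$ as in Proposition~\ref{prop:W}.

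The step I expect to require genuine care is checking that the units $u,v$ --- which do distinguish the present situation from the pure binomial of \S\ref{sec:motivic_zeta_binomials} --- leave the classes $[\mathcal{X}^\bullet_{n,m}]$ of Lemma~\ref{lema:b} unchanged. For $\sigma^{\pm}$ this is immediate, since only orders intervene. For $\rho$ the relevant angular-component locus is $\{\,\ba\in(\CC^{*})^{I_z}\mid a_z^{\ex}=c\prod_{k\in I}a_k^{N_k}\,\}$, with $c\in\CC^{*}$ the value of $u$ at the centre, and a monomial rescaling of a single coordinate carries it onto $V(g_0)$, so $[V(g_0)]$ is unchanged. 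For $\rho^{\ast}$ one observes in addition that only the leading value $c$ of $u\circ\tilde\varphi$ enters the cancellation condition of \S\ref{sec:motivic_zeta_binomials}, and that the locus of jets of a given ``extra order'' $i$ has the same class as in the pure binomial case regardless of the higher-order terms of $u\circ\tilde\varphi$, so Lemma~\ref{lema:b}(3) applies verbatim. With this settled, everything else is the standard change-of-variables bookkeeping.

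Finally, the motivic statement is obtained in the same way, replacing the change of variables by its motivic version and $\mathcal{X}^\bullet_{n,m}$, $\str^\bullet$ by $\mathcal{X}^{1,\bullet}_{n,m}$, $\widetilde{\str}^\bullet$ from Remark~\ref{Wmot}; the class $[V(g_1)]$ with its $\hat\mu$-action is likewise insensitive to the rescaling by $c$, and the dependence of the local monodromy on the point of $E_I^\circ$ is accounted for, exactly as in \cite[\S3.3]{Denef-LoeserBarca}, by passing from $E_I^\circ$ to its unramified cover $\widetilde{E}_I^\circ$.
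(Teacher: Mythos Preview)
Your argument is correct and follows the same route as the paper, which dispatches the lemma in one sentence by invoking ``local triviality of the local forms $G_I$ along the stratum $E_I^\circ$'' together with the Stratification principle (Proposition~\ref{prop:sp}); you simply unpack what that sentence means, carrying out the change-of-variables bookkeeping explicitly and, in particular, verifying that the units $u,v$ do not alter the classes $[\mathcal{X}^\bullet_{n,m}]$ of Lemma~\ref{lema:b}. The only part that goes slightly beyond what the paper states (though not beyond what it implicitly uses) is your careful treatment of the $\rho^\ast$ case and of the $\hat\mu$-equivariant version, but these are exactly the checks hidden behind the words ``local triviality'' and the references to \cite{GG:14,gui:02,bn:20}, so there is no genuine divergence in method.
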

\begin{proof}This is consequence of the local triviality of the local forms $G_I$ along the stratum $E^\circ_i$ and  the Stratification principle.
\end{proof}

\section{A general formula for the topological zeta functions of \texorpdfstring{$G$}{G}
and \texorpdfstring{$F$}{F}}
\label{sec:MainResults}

We are going to use the result of the previous section to compute the topological zeta function of~$G$
with respect to a form. As it was stated in the Introduction we are mostly interested
in a form where $\nu_i=1$ for $i\in\{1,\dots,d\}$ and $\nu_z=d + 1$. Since
for general monomial forms the arguments follow the same guidelines, we state the theorem
in a more general setting.

\begin{theorem} \label{thmpsuspACNLM}
Consider $G(\mathbf{\boldsymbol{x}},z)= z^p (z^\ex - f(\mathbf{x}))$, and
$\omega_{d+1} = \mathbf{\boldsymbol{x}}^{\boldsymbol{\nu}} z^{\nu_z} \frac{\dif \mathbf{\boldsymbol{x}}}{\mathbf{x}} \frac{\dif z}{z}$.
The following explicit expressions for the local topological zeta function in terms of  $r =\frac{(\ex+p)s+ \nu_z}{\ex}$ holds.
\begin{equation*}
\Ztop{}(G,\omega_{d+1},s)_\zero=
\frac{1}{ \ex r}+
U(r,s)
\Ztop{}(f, \omega_d, r)_\zero
	- \frac{s}{s+1} \sum_{1\neq e | \ex } \frac{J_2(e)}{\ex } \Ztop{(e)}(f, \omega_d, r)_\zero,
\end{equation*}
where $U(r,s) =
\frac{s}{r (\nu_z + ps)}
- \frac{s}{s+1}\frac{1}{\ex}$.
\end{theorem}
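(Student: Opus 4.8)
The plan is to assemble the formula from the stratum-wise computations in Sections~\ref{sec:motivic_zeta_binomials}--\ref{sec:motivic_zeta_susp}, so the real work is bookkeeping: summing the four terms $\str^{\bullet}_{\topo}$ over the strata $E_I^\circ$ of an embedded resolution of $f\omega_d$ and recognizing the result in terms of the zeta functions of $f$. By Lemma~\ref{lema:globalW} and the Stratification principle (Proposition~\ref{prop:sp}, applied after Euler specialization as indicated in the excerpt), one has $\Ztop{}(G,\omega_{d+1},s)_\zero = \sum_{\emptyset\neq I\subset J} \chi(E_I^\circ)\bigl(\str^{\sigma^+}_{\topo} + \str^{\sigma^-}_{\topo} + \str^{\rho}_{\topo} + \str^{\rho^\ast}_{\topo}\bigr) + (\text{the }I=\emptyset\text{ term})$, where the per-stratum terms are given by Lemma~\ref{aportop} in terms of $r$, $N_k$, $\nu_k$, and $e_I=\gcd(\ex,N_I)$ with $N_I=\gcd\mathbf{N}_I$.

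First I would treat the $I=\emptyset$ stratum separately: there $\mathbf{N}_I$ is empty, so $g$ is (locally) just $z^p(z^\ex-u)$ with $u$ a unit, and the contribution reduces to the ``constant'' piece $\frac{1}{\ex r}$ of the formula (this is the analogue of the free $\frac1{\ex t}$ term in \eqref{fsuspACNLM}); one checks this directly from Lemma~\ref{aportop} with $I=\emptyset$, where the products over $k\in I$ are empty. Next, for $I\neq\emptyset$ I would combine $\str^{\sigma^+}_{\topo}$, $\str^{\rho}_{\topo}$, and $\str^{\rho^\ast}_{\topo}$ (via Lemma~\ref{aportop}, using $\termino{\rho}{\topo}{g,\omega}=\str^{\rho}_{\topo}$), collecting the coefficient of $\prod_{k\in I}\frac{1}{N_k r+\nu_k}$; the $\sigma^-$ term splits into a part with $\prod_{k\in I}\frac{1}{\nu_k}$ and a part with $\prod_{k\in I}\frac{1}{N_k r+\nu_k}$. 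Summing the $\prod_{k\in I}\frac1{N_kr+\nu_k}$ coefficients over all nonempty $I$ reproduces, by the resolution formula \eqref{eq:zetas_res}, a linear combination of $\Ztop{}(f,\omega_d,r)_\zero$ and the twisted $\Ztop{(e)}(f,\omega_d,r)_\zero$, because the coefficient $-\frac{e_I^2}{\ex}$ from $\str^{\rho}_{\topo}$ depends on $I$ only through $e_I=\gcd(\ex,N_I)$, and $e_I^2=\sum_{e\mid e_I}J_2(e)=\sum_{e\mid \ex,\; e\mid N_I}J_2(e)$ by \eqref{jk}; the sum over $I$ with $e\mid N_I$ (equivalently $e\mid N_i$ for all $i\in I$) of $\chi(E_I^\circ)\prod_{k\in I}\frac1{N_kr+\nu_k}$ is exactly $\Ztop{(e)}(f,\omega_d,r)_\zero$. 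The $\prod_{k\in I}\frac1{\nu_k}$ coefficients, summed over nonempty $I$, combine with the $I=\emptyset$ adjustment via the identity \eqref{DLz(0)=1} to produce the remaining rational prefactors.

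The main obstacle is getting the rational function $U(r,s)$ right: one must add the coefficients $\frac{s}{r(\nu_z+ps)}$ (from $\sigma^+$, after writing $\frac1{\nu_z+ps}\cdot\frac{s}{\ex r}\cdot\frac{\ex r}{?}$ — more precisely, rewriting $\str^{\sigma^+}_{\topo}$, $\str^{\sigma^-}_{\topo}$, $\str^{\rho}_{\topo}$, $\str^{\rho^\ast}_{\topo}$ over the common factor $\prod_{k\in I}\frac1{N_kr+\nu_k}$ and over $\prod_{k\in I}\frac1{\nu_k}$), the $\frac1{\ex r}$-type coefficient from $\sigma^-$, and the $\frac{1}{s+1}$-weighted copy of $\str^{\rho}_{\topo}$ from $\rho^\ast$, and verify the algebraic identity that the total $\prod_{k\in I}\frac1{N_kr+\nu_k}$-coefficient equals $U(r,s)$ plus the twisted corrections, while the $\prod_{k\in I}\frac1{\nu_k}$-coefficient is $\frac{s}{s+1}\cdot\frac1{\ex}\cdot\frac{s+1}{s}$-matched against \eqref{DLz(0)=1}. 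I expect this to reduce to a short but delicate partial-fraction computation in the variables $r,s$ (with $\nu_z+ps$ and $s+1$ the relevant denominators), and I would verify it by specializing to $p=0$, $\nu_z=1$ (so $r=t=s+\frac1\ex$, cf.\ Remark~\ref{rt}) and checking it recovers \eqref{fsuspACNLM} and the $\Ztop{}(f,0)_\zero=1$ normalization.
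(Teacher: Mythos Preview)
Your overall strategy matches the paper's proof: stratify via an embedded resolution of $f\omega_d$, apply Lemma~\ref{aportop} on each stratum, sum using the resolution formula~\eqref{eq:zetas_res}, decompose $e_I^2=\sum_{e\mid e_I}J_2(e)$ via~\eqref{jk} to get the twisted zeta functions, and invoke~\eqref{DLz(0)=1}. All of that is exactly right.

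There is, however, a genuine misstep in your handling of the ``$I=\emptyset$ stratum''. For the \emph{local} zeta function at~$\zero$, the relevant arcs lift to arcs based in $(\pi\times\id)^{-1}(\zero)=\pi^{-1}(\zero)\times\{0\}$, and $\pi^{-1}(\zero)$ is entirely contained in the exceptional locus; hence there is no $I=\emptyset$ contribution at all. (If you try to compute it formally from Lemma~\ref{aportop} with empty products equal to~$1$, you will not get $\frac{1}{\ex r}$ anyway: for instance $\str^{\sigma^+}_{\topo}$ gives $\frac{1}{\nu_z+ps}$ and $\str^{\sigma^-}_{\topo}$ gives $0$.) The constant $\frac{1}{\ex r}$ comes \emph{only} from the first part of $\str^{\sigma^-}_{\topo}$, namely $\frac{1}{\ex r}\prod_{k\in I}\frac{1}{\nu_k}$, summed over nonempty $I$ weighted by $\chi(E_I^\circ)$ and then simplified by~\eqref{DLz(0)=1}. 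Once you drop the spurious $I=\emptyset$ term, the coefficient of $\prod_{k\in I}\frac{1}{N_kr+\nu_k}$ is simply
\[
\frac{1}{\nu_z+ps}-\frac{1}{\ex r}-\frac{e_I^2}{\ex}\cdot\frac{s}{s+1},
\]
and since $\ex r-(\nu_z+ps)=\ex s$, the $e_I$-independent part is $\frac{s}{r(\nu_z+ps)}$; separating the $e=1$ summand of the $J_2$-expansion then gives $U(r,s)$ immediately, with no delicate partial-fraction work needed.
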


\begin{proof}
According to \eqref{eq:globalbullet} and \eqref{eq:doublepart}  and  Lemmata~\ref{lema:globalW} and \ref{aportop},  the topological zeta function $\Ztop{}(G,\omega_{d+1},s)_\zero$  equals
\begin{gather*}
\sum_{\emptyset \ne I \subset J}
\left(  \frac{\chi(E_I^\circ)  }{\displaystyle \ex r \prod_{i \in I} {\nu}_i}
+
\left(\frac{1}{\nu_z + ps}-\frac{1}{\displaystyle \ex r}-\frac{e_I^2}{\ex }\frac{s}{s+1}\right)
\frac{\chi(E_I^\circ)}{\displaystyle \prod_{i \in I} (N_i r + {\nu}_i)}
\right)\\
=
\sum_{\emptyset \ne I \subset J} \left(
  \frac{\chi(E_I^\circ)  }{\displaystyle \ex r \prod_{i \in I} \nu_i}
+  \frac{\ex r - (\nu_z + ps)}{\displaystyle (\nu_z + ps) \ex r} \frac{\chi(E_I^\circ)}{
\displaystyle \prod_{i \in I} (N_i r + \nu_i)} \right)-  \frac{s}{\ex (s+1)} \sum_{\emptyset \ne I \subset J}
\frac{e_I^2\chi(E_I^\circ)}{
\displaystyle  \prod_{i \in I} (N_i r + \nu_i)}.
\end{gather*}
Then, using \eqref{DLz(0)=1} and the expression of the local zeta function in terms of an embedded resolution \eqref{eq:zetas_res}, we have
\begin{gather*}
  \frac{1}{\displaystyle \ex r}
 +
 \frac{\ex r - (\nu_z + ps)}{\displaystyle (\nu_z + ps) \ex r}  \Ztop{}(f,\omega_d, r)_\zero
-  \frac{s}{\ex (s+1)} \sum_{\emptyset \ne I \subset J}
\frac{e_I^2\chi(E_I^\circ)}{
\displaystyle  \prod_{i \in I} (N_i r + \nu_i)}.
\end{gather*}
We rewrite the last factor of the last term and apply \eqref{jk}:
\begin{gather*}
\sum_{k \mid \ex }  k^2
\sum_{\substack{\emptyset \ne I \subset J\\ e_I=k }}     \frac{\chi(E_I^\circ)}{\displaystyle\prod_{i\in I} (N_i r + \nu_i)}=
\sum_{k \mid \ex }  \sum_{e \mid k } J_2(e)
\sum_{\substack{\emptyset \ne I \subset J\\ e_I = k}}     \frac{\chi(E_I^\circ)}{\displaystyle\prod_{i\in I} (N_i r+ \nu_i)}\\
= \sum_{e \mid \ex }  J_2(e)   \sum_{e \mid k \mid \ex  }
\sum_{\substack{\emptyset \ne I \subset J\\ e_I = k }}     \frac{\chi(E_I^\circ)}{\displaystyle\prod_{i\in I} (N_i r + \nu_i)}
=\sum_{e \mid \ex }  J_2(e)
\sum_{\substack{\emptyset \ne I \subset J\\ e \mid N_I }}     \frac{\chi(E_I^\circ)}{\displaystyle\prod_{i\in I} (N_i r + \nu_i)}\\
=\sum_{e \mid \ex }  J_2(e) \Ztop{(e)}(f, \omega_d, r).
\qedhere
\end{gather*}
\end{proof}

Let $\pol(f,\dif x_1\wedge\dots\wedge\dif x_d)$ be the set of poles $\Ztop{}(f,\dif x_1\wedge\dots\wedge\dif x_d,s)$. This set can be used to
bound the set of poles of $\Ztop{}(G,z^d\dif x_1\wedge\dots\wedge\dif x_d\wedge\dif z,s)$.

\begin{cor}
The set $\pol(G,z^d\dif x_1\wedge\dots\wedge\dif x_d\wedge\dif z)$ is contained in
\[
\left\{
-1, -\frac{d+1}{\ex+p}, -\frac{d+1}{p}
\right\}
\cup
\left\{
\frac{\rho_0\ex-d - 1}{\ex+p}
\middle|
\rho_0\in\pol(f,\omega_d)
\right\}.
\]
\end{cor}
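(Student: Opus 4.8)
The plan is to read off the poles of $\Ztop{}(G,z^d\dif x_1\wedge\dots\wedge\dif x_d\wedge\dif z,s)$ directly from the closed formula of Theorem~\ref{thmpsuspACNLM}, specialized to the case $\nu_1=\dots=\nu_d=1$, $\nu_z=d+1$, and the differential form $z^d\dif x_1\wedge\dots\wedge\dif x_d\wedge\dif z$. First I would substitute these values: here $\boldsymbol{\nu}=(1,\dots,1)$ so $\omega_d=\dif x_1\wedge\dots\wedge\dif x_d$ in the normalization of the paper, and $r=\frac{(\ex+p)s+(d+1)}{\ex}$. The theorem expresses $\Ztop{}(G,\omega_{d+1},s)_\zero$ as a sum of three terms: the constant-type term $\frac{1}{\ex r}$, the term $U(r,s)\,\Ztop{}(f,\omega_d,r)_\zero$ with $U(r,s)=\frac{s}{r(\nu_z+ps)}-\frac{s}{s+1}\cdot\frac{1}{\ex}$, and the twisted sum $-\frac{s}{s+1}\sum_{1\neq e\mid \ex}\frac{J_2(e)}{\ex}\Ztop{(e)}(f,\omega_d,r)_\zero$.

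Next I would track where each factor can vanish in a denominator. The factor $\frac{1}{r}$ vanishes (as a function of $s$) exactly when $r=0$, i.e. $(\ex+p)s+(d+1)=0$, giving the candidate pole $s=-\frac{d+1}{\ex+p}$. The factor $\frac{1}{\nu_z+ps}=\frac{1}{(d+1)+ps}$ appearing in $U(r,s)$ contributes the candidate $s=-\frac{d+1}{p}$ (absent when $p=0$, consistent with the set being only an upper bound). The factor $\frac{s}{s+1}$ contributes $s=-1$; note the numerator $s$ kills any pole that the other terms might otherwise create at $s=0$, and in $U(r,s)$ the overall factor $s$ similarly tames the point $r=0$ and the point $\nu_z+ps=0$ except to the stated orders, but since we only claim containment of the pole \emph{set} this is harmless. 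Finally, the composed zeta functions $\Ztop{}(f,\omega_d,r)_\zero$ and $\Ztop{(e)}(f,\omega_d,r)_\zero$, as functions of $s$, have poles precisely at those $s$ for which $r(s)=\frac{(\ex+p)s+(d+1)}{\ex}$ is a pole $\rho_0$ of $\Ztop{}(f,\omega_d,\cdot)$ (the poles of the twisted $\Ztop{(e)}$ form a subset of $\pol(f,\omega_d)$, by \eqref{eq:zetas_res}, so no new points arise); solving $\frac{(\ex+p)s+(d+1)}{\ex}=\rho_0$ gives $s=\frac{\rho_0\ex-d-1}{\ex+p}$, which is exactly the parametrized family in the statement.

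Assembling these: every candidate pole of the right-hand side of the Theorem's formula lies in $\{-1\}\cup\{-\frac{d+1}{\ex+p}\}\cup\{-\frac{d+1}{p}\}\cup\{\frac{\rho_0\ex-d-1}{\ex+p}\mid\rho_0\in\pol(f,\omega_d)\}$, and hence so does the pole set of $\Ztop{}(G,z^d\dif x_1\wedge\dots\wedge\dif x_d\wedge\dif z,s)$, since that set is contained in the union of the pole sets of the summands. One must also observe that the ``constant'' term $\frac{1}{\ex r}$ is genuinely non-constant here (it depends on $s$ through $r$) and contributes $s=-\frac{d+1}{\ex+p}$, already in the list. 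The main obstacle, such as it is, is purely bookkeeping: making sure that no cancellation is being \emph{relied upon} (we only claim an inclusion, so spurious candidates are allowed), and conversely that the substitution $\rho_0=r(s)$ correctly transports \emph{all} poles of the inner zeta functions — i.e. that composing with the affine change $r\mapsto s$ neither creates nor, more importantly for the inclusion, needs to destroy poles; this is immediate because $s\mapsto r(s)$ is an affine bijection (as $\ex+p>0$). A small point worth stating explicitly is that $\pol(f,\dif x_1\wedge\dots\wedge\dif x_d)=\pol(f,\omega_d)$ under the present normalization of $\omega_d$, so the notation in the corollary matches the notation in the theorem.
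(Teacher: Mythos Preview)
The paper states this corollary without proof, as an immediate consequence of Theorem~\ref{thmpsuspACNLM}; your derivation---specializing to $\nu_1=\dots=\nu_d=1$, $\nu_z=d+1$ and reading off the possible denominators term by term---is exactly the intended argument, and the bookkeeping (the values of $s$ solving $r=0$, $\nu_z+ps=0$, $s+1=0$, and $r(s)=\rho_0$) is correct.

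One step deserves a little more care. You assert that the poles of $\Ztop{(e)}(f,\omega_d,\cdot)$ are contained in $\pol(f,\omega_d)$ ``by \eqref{eq:zetas_res}''. What the resolution formula actually shows is that the twisted and untwisted zeta functions share the same set of \emph{candidate} poles $\{-\nu_i/N_i : i\in J\}$; it does not by itself exclude the possibility of a cancellation occurring in the untwisted sum that fails to occur in a twisted one. Since the corollary bounds by the \emph{actual} pole set $\pol(f,\omega_d)$ rather than by the candidate poles of a chosen resolution, this inclusion is not a formality. The paper does not address the point either, so your argument is faithful to the paper's implicit reasoning; but if you want an airtight statement you should either supply a separate justification for $\pol^{(e)}(f,\omega_d)\subseteq\pol(f,\omega_d)$ or weaken the bound to use the candidate poles from a fixed resolution of $f\omega_d$.
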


As we stated in the Introduction this formula is useful for the computation of the topological zeta function
for Lê-Yomdin singularities. From now on, we consider $p=0$ and we switch from $G$ to $F$.
Theorem~\ref{thmfsuspACNLM} contains formulas which express the local topological zeta function of the suspension $F$  of the hypersurface $f$ by $\ex $ points in terms of the local topological zeta function and local twisted topological zeta functions of $f$ and $\ex $. The following  arithmetic lemma is needed.

\begin{lemma}\label{lema:arit}
Let $\ex , l
\in \NN$ such that $l$ does not divide $\ex $.
Let $l_1:=\frac{l}{\gcd (l, \ex )}$.
Then,
the set
\[
D(\ex , l_1
):=\left\{ M\in\NN\ \middle|\
l_1\gcd\left(\ex , M\right) \mid M\right\}
\]
satisfies that
\begin{enumerate}[label=\rm(\arabic{enumi}), series=arit]
\item\label{arit-1} if $M \in D(\ex , l_1)$ then $l_1 \mid M$,
\item\label{arit-2} if $M_1, M_2\in D(\ex ,l_1
)$ then $\gcd(M_1,M_2)\in D(\ex ,l_1)$,
\item\label{arit-3} If $M_1 \in D(\ex ,l_1)$ and $M_1 \mid M_2$ then $M_2 \in D(\ex , l_1)$.
\end{enumerate}
As a consequence there exists $m\in\NN$ such that $D(\ex , l_1)=\{k l_1 m\mid k \in\NN\}$.
Moreover,
\[
m=\min_{n\in\NN}\gcd(\ex ,l_1^n).
\]
\end{lemma}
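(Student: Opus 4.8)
The plan is to prove the three divisibility properties \ref{arit-1}--\ref{arit-3} directly from the definition of $D(\ex,l_1)$, then deduce the structural description from them, and finally identify the generator $m$. Throughout I write $e:=\gcd(l,\ex)$ so that $l=e\cdot l_1$ with $\gcd(l_1,\ex/e)=1$, though the cleaner fact to use is that $\gcd(l_1,\ex)$ is a divisor of $e$ and that every prime dividing $l_1$ divides $\ex$ precisely when it divides $e$; in fact the essential point is that $l_1$ and $\ex/e$ are coprime, so no prime factor of $l_1$ survives ``too many times'' in $\ex$.

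For \ref{arit-1}: if $M\in D(\ex,l_1)$ then $l_1\gcd(\ex,M)\mid M$, and since $\gcd(\ex,M)\geq 1$ we get $l_1\mid l_1\gcd(\ex,M)\mid M$ immediately. For \ref{arit-3}: suppose $M_1\in D(\ex,l_1)$ and $M_1\mid M_2$. Write $g_i:=\gcd(\ex,M_i)$; since $M_1\mid M_2$ we have $g_1\mid g_2$, hence $l_1 g_2$ is a multiple of $l_1 g_1$ times $g_2/g_1$. I would argue prime-by-prime: for a prime $q$, comparing $q$-adic valuations, $v_q(l_1)+v_q(g_2)=v_q(l_1)+\min(v_q(\ex),v_q(M_2))$; one splits into the case $q\mid \ex$ and $q\nmid\ex$ and uses $v_q(l_1g_1)\leq v_q(M_1)\leq v_q(M_2)$ together with $v_q(g_2)-v_q(g_1)\leq v_q(M_2)-v_q(M_1)$ when $q\mid\ex$ (since then $v_q(g_i)=\min(v_q(\ex),v_q(M_i))$ and increasing $M$ can only increase $g$ by at most the increase in $M$, capped at $v_q(\ex)$), and $v_q(g_i)=0$ when $q\nmid\ex$. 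This yields $v_q(l_1 g_2)\leq v_q(M_2)$ for all $q$, i.e. $M_2\in D(\ex,l_1)$. Property \ref{arit-2} is the analogous valuation computation: for each prime $q$ one checks $v_q(l_1)+\min(v_q(\ex),\min(v_q(M_1),v_q(M_2)))\leq\min(v_q(M_1),v_q(M_2))$, which follows because the inequality $v_q(l_1)+\min(v_q(\ex),v_q(M_i))\leq v_q(M_i)$ holds for each $i$ and the left side is minimized (as a function of $M_i$) by taking the smaller valuation — here one uses that the function $t\mapsto \min(v_q(\ex),t)$ is monotone, so the constraint is governed by the smaller of the two.

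From \ref{arit-1}--\ref{arit-3} the structure follows formally: $D(\ex,l_1)$ is nonempty (it contains $l_1\cdot\ex^N$ for $N$ large, since then $\gcd(\ex,l_1\ex^N)=\ex$ and $l_1\ex\mid l_1\ex^N$), so let $m_0:=\min D(\ex,l_1)$. By \ref{arit-3}, every multiple of $m_0$ lies in $D(\ex,l_1)$; conversely if $M\in D(\ex,l_1)$ then $\gcd(M,m_0)\in D(\ex,l_1)$ by \ref{arit-2}, and $\gcd(M,m_0)\leq m_0$ forces $\gcd(M,m_0)=m_0$, i.e. $m_0\mid M$. Hence $D(\ex,l_1)=\{k m_0\mid k\in\NN\}$, and since $l_1\mid m_0$ by \ref{arit-1} we may write $m_0=l_1 m$. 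For the formula $m=\min_n\gcd(\ex,l_1^n)$: note $\gcd(\ex,l_1^n)$ is nonincreasing in $n$ (prime-by-prime) and stabilizes once $n$ exceeds every exponent $v_q(\ex)$ with $q\mid l_1$, to the value $m^\ast:=\prod_{q\mid l_1}q^{v_q(\ex)}$; one checks $l_1 m^\ast\in D(\ex,l_1)$ (for each prime $q\mid l_1$, $v_q(\gcd(\ex,l_1m^\ast))=v_q(\ex)$ so $v_q(l_1\gcd(\ex,l_1m^\ast))=v_q(l_1)+v_q(\ex)\leq v_q(l_1m^\ast)$, and for $q\nmid l_1$ the left side has $q$-valuation $0$), giving $m\mid m^\ast$; and minimality of $m_0$ together with the computation that any element of $D$ has $q$-valuation at least $v_q(l_1)+v_q(\ex)$ for $q\mid l_1$ (which is $v_q(l_1 m^\ast)$) gives $m^\ast\mid m$. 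Hence $m=m^\ast=\min_n\gcd(\ex,l_1^n)$.

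The main obstacle is bookkeeping the valuation inequalities in \ref{arit-2} and \ref{arit-3} cleanly; the crux in both is the elementary monotonicity of $t\mapsto\min(v_q(\ex),t)$ and the observation that passing to a divisor (resp. to a gcd) only decreases $q$-valuations by an amount that is ``already absorbed'' by the slack in the defining inequality $l_1\gcd(\ex,M)\mid M$. Once that is set up, the structural consequence and the identification of $m$ are routine.
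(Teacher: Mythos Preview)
Your argument follows the same valuation-based route as the paper's proof: property~\ref{arit-1} is immediate, and \ref{arit-2}--\ref{arit-3} are checked prime-by-prime via the characterization $v_q(l_1)+\min(v_q(\ex),v_q(M))\leq v_q(M)$. Your deduction of the structural consequence (nonemptiness, take the minimum, use closure under $\gcd$ and multiples) is in fact more explicit than the paper, which simply invokes that a subset of $\NN$ closed under multiples and $\gcd$'s consists of the multiples of a fixed integer; and your identification $m=m^\ast=\prod_{q\mid l_1}q^{v_q(\ex)}$ is correct and again more detailed than the paper's one-line ``straightforward''.

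There is one slip worth flagging. The sequence $n\mapsto\gcd(\ex,l_1^n)$ is \emph{nondecreasing}, not nonincreasing: for $q\mid l_1$ one has $v_q(\gcd(\ex,l_1^n))=\min(v_q(\ex),n\,v_q(l_1))$, which grows with $n$ until it stabilizes at $v_q(\ex)$. Thus the stable value $m^\ast$ is the \emph{maximum} of the sequence, not the minimum. Your proof that $m=m^\ast$ is unaffected by this, since you argue directly that $l_1 m^\ast\in D(\ex,l_1)$ and that $l_1 m^\ast$ divides every element of $D(\ex,l_1)$; only the final identification with $\min_n\gcd(\ex,l_1^n)$ goes wrong. (Concretely, take $\ex=12$, $l=8$, so $l_1=2$: then $m_0=8$, $m=4$, but $\min_n\gcd(12,2^n)=\gcd(12,2)=2$.) This points to a likely typo in the stated formula rather than a defect in your method.
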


The proof of Lemma \ref{lema:arit} will be given after Example \ref{ex:LvP}.

\begin{theorem} \label{thmfsuspACNLM}
Consider
$F(\mathbf{\boldsymbol{x}},z)=  z^\ex - f(\mathbf{x})$, and
$\omega_{d+1} = \mathbf{\boldsymbol{x}}^{\boldsymbol{\nu}} z \frac{\dif \mathbf{\boldsymbol{x}}}{\mathbf{x}} \frac{ \dif z}{z}$, i.e.,  with $p=0$ and $\nu_z=1$.
The following explicit expressions for the local and the local $l$-twisted
Denef-Loeser topological zeta functions  of the suspension hold (recall $t=s+\frac{1}{\ex }$).
\begin{enumerate}[label=\rm(\alph{enumi})]
\item For $l=1$ we have
\begin{equation}
\label{fsuspACNLMth}
\frac{s+1}{s}\Ztop{}(F,\omega_{d+1},s)_\zero=
\frac{s+1}{\ex st}+\frac{\ex -1}{\ex }\frac{t + 1}{t}
\Ztop{}(f,\omega_d,t)_\zero
	- \sum_{1\neq e | \ex } \frac{J_2(e)}{\ex } \Ztop{(e)}(f,\omega_d,t)_\zero
\end{equation}

\item For $1\neq l\mid \ex $ we have
\begin{equation}
	\label{twisted1fsuspACNLMth}
\Ztop{(l)}(F,\omega_{d+1},s)_\zero=  \! \frac{1}{\ex t}+
\Ztop{(l)}(f,\omega_d,t)_\zero \!- \! \frac{t+1}{\ex t}\Ztop{}(f,\omega_d,t)_\zero
	\!- \! \sum_{ 1\neq e \mid \ex } \frac{J_2(e)}{\ex } \Ztop{(e)}(f,\omega_d,t)_\zero,
\end{equation}

\item For $l\nmid \ex $ we have
\begin{equation}
	\Ztop{(l)}(F,\omega_{d+1},s)_\zero=
\Ztop{(l)}(f,\omega_d,t)_\zero
	- \! \sum_{e | \ex } \frac{J_2(e)}{\ex }
	\Ztop{(\lcm(e,m))}(f,\omega_d,t)_\zero.
\label{twisted2fsuspACNLMth}
\end{equation}

\end{enumerate}

\end{theorem}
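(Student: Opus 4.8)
The plan is to deduce (a) directly from Theorem~\ref{thmpsuspACNLM} and to establish (b) and (c) by rerunning the proof of that theorem in the equivariant setting. For (a), specialize Theorem~\ref{thmpsuspACNLM} to $p=0$ and $\nu_z=1$: then $r$ becomes $t=s+\frac1\ex$ by Remark~\ref{rt}, the factor $\nu_z+ps$ equals $1$, and $U(r,s)$ becomes $\frac st-\frac s{s+1}\cdot\frac1\ex$. Multiplying the identity of Theorem~\ref{thmpsuspACNLM} by $\frac{s+1}s$ and using $s+1=t+1-\frac1\ex$, which yields the elementary identity $\frac{s+1}t-\frac1\ex=\frac{\ex-1}\ex\cdot\frac{t+1}t$, transforms it into \eqref{fsuspACNLMth}.

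For (b) and (c) I would start from the two partitions \eqref{eq:globalbullet} and \eqref{eq:doublepart}, the motivic analogue of Lemma~\ref{lema:globalW}, and the equivariant Euler specialization $\chi_{\topo}(\,\cdot\,,\alpha)$ with $\alpha$ of order $l$. Since the pullback of $F$ near a stratum $E_I^\circ$ is a pure binomial times a unit, Corollary~\ref{twistedaportop} applies stratum by stratum with $r=t$, giving $\widetilde{\str}^{\bullet}_{\topo,I}=\chi(E_I^\circ)\,\str^{\bullet}_{\topo}$ precisely when $l\mid N_I$ (for $\bullet=\sigma^+$), $l\mid\ex$ (for $\bullet=\sigma^-$), or $l\mid\lcm(\ex,N_I)$ (for $\bullet=\rho$), and $\widetilde{\str}^{\bullet}_{\topo,I}=0$ otherwise, while $\widetilde{\str}^{\rho^\ast}_{\topo,I}=0$ always; this last vanishing is exactly what removes the $\frac s{s+1}$-type denominators of (a). Summing over $I$: the $\sigma^+$-part restricted by $l\mid N_I$ (equivalently $l\mid N_i$ for all $i\in I$) equals $\Ztop{(l)}(f,\omega_d,t)_\zero$ by \eqref{eq:zetas_res}; the $\sigma^-$-part equals $\frac1{\ex t}(1-\Ztop{}(f,\omega_d,t)_\zero)$ when $l\mid\ex$, using \eqref{DLz(0)=1} and \eqref{eq:zetas_res}, and $0$ when $l\nmid\ex$; and the $\rho$-part equals $-\frac1\ex\sum_{\{I\,:\,l\mid\lcm(\ex,N_I)\}}e_I^2\,\chi(E_I^\circ)\prod_{k\in I}\frac1{N_kt+\nu_k}$ with $e_I=\gcd(\ex,N_I)$, which I would handle by writing $e_I^2=\sum_{e\mid e_I}J_2(e)$ via \eqref{jk} and re-indexing over $e\mid\ex$ exactly as in the proof of Theorem~\ref{thmpsuspACNLM}, the constraint $e\mid e_I$ being ``$e\mid\ex$ and $e\mid N_I$''. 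If $1\neq l\mid\ex$ the constraints involving $l$ are automatic, the $\rho$-part is $-\frac1\ex\sum_{e\mid\ex}J_2(e)\Ztop{(e)}(f,\omega_d,t)_\zero$, and isolating $e=1$ (recall $\Ztop{(1)}=\Ztop{}$), adding the $\sigma^\pm$ and $\sigma^+$ contributions and simplifying $\frac1{\ex t}+\frac1\ex=\frac{t+1}{\ex t}$ gives \eqref{twisted1fsuspACNLMth}. If $l\nmid\ex$ the $\sigma^-$-part drops, and a prime-by-prime comparison identifies $\{N : l\mid\lcm(\ex,N)\}$ with $D(\ex,l_1)$, $l_1=l/\gcd(l,\ex)$, which by Lemma~\ref{lema:arit} is the set of multiples of a single integer $m$; hence ``$e\mid N_I$ and $m\mid N_I$'' becomes ``$\lcm(e,m)\mid N_I$'', the $\rho$-part collapses to $-\frac1\ex\sum_{e\mid\ex}J_2(e)\Ztop{(\lcm(e,m))}(f,\omega_d,t)_\zero$, and adding $\Ztop{(l)}(f,\omega_d,t)_\zero$ yields \eqref{twisted2fsuspACNLMth}.

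The step I expect to be the main obstacle is this last piece of bookkeeping in (c): reorganizing the divisibility condition $l\mid\lcm(\ex,N_I)$ through the monoid $D(\ex,l_1)$ and Lemma~\ref{lema:arit}, and then folding it into the Jordan-totient expansion of $\sum_I e_I^2(\cdots)$ so that the sum closes up in terms of the twisted zeta functions $\Ztop{(\lcm(e,m))}(f,\omega_d,t)_\zero$; care is needed to track the index appearing in \eqref{twisted2fsuspACNLMth}. Everything else is the Gauss--Jordan re-indexing and the normalization \eqref{DLz(0)=1} already used in the proof of Theorem~\ref{thmpsuspACNLM}, together with the routine algebra of passing from $r$ to $t$.
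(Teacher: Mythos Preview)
Your proposal is correct and follows essentially the same approach as the paper: part (a) is the specialization of Theorem~\ref{thmpsuspACNLM} with the same algebraic simplification of $U$, and parts (b)--(c) rerun that argument equivariantly via Corollary~\ref{twistedaportop}, handling the $\rho$-part through the Jordan-totient identity \eqref{jk} and, in case (c), repackaging $l\mid\lcm(\ex,N_I)$ via $D(\ex,l_1)$ and Lemma~\ref{lema:arit}. The only cosmetic difference is the order of operations in (c): the paper first rewrites $l\mid\lcm(\ex,N_I)$ as $l_1 e_I\mid N_I$ and recognizes $D(\ex,l_1)$ \emph{after} the Jordan-totient expansion, whereas you identify $\{N:l\mid\lcm(\ex,N)\}=D(\ex,l_1)$ directly by a prime-by-prime check \emph{before} expanding.
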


\begin{remark}Note that \eqref{fsuspACNLM} is equivalent to \eqref{fsuspACNLMth}.\end{remark}

\begin{proof}The statement \eqref{fsuspACNLMth} is a consequence of Theorem \ref{thmpsuspACNLM}. Taking $p=0$ the polynomial $G$ becomes $F$. Moreover taking $p=0$ and $\nu_z=1$ the parameter $r$ becomes $t$ and $U(r,s)$ becomes $U(t,s) =   1 -  \frac{1}{\ex t} - \frac{s}{s+1} \frac{1}{\ex} = \frac{s}{s+1} \frac{\ex-1}{\ex}\frac{t+1}{t}$. Finally, multiplying by $\frac{s+1}{s}$ we get \eqref{fsuspACNLMth}. See Remark \ref{matrix} for the motivation of the factor $\frac{s+1}{s}$.

To compute the local $l$-twisted topological zeta function  $\Ztop{(l)}(F,\omega_{d+1},s)_\zero$ we use the identity \eqref{eq:tzetatopW} and Corollary~\ref{twistedaportop}. If $l \mid \ex$,  we have
\[
\Ztop{(l)}(F,\omega_{d+1},s)_\zero=
\sum_{\emptyset \ne I \subset J}
\left(\frac{\chi(E_I^\circ)}{\displaystyle \ex t\prod_{i\in I} \nu_i}
+\left ( -\frac{1}{\displaystyle \ex t}-\frac{e_I^2}{\ex }\right)
\frac{\chi(E_I^\circ)}{
\displaystyle
\prod_{i\in I} (N_i t + \nu_i)}
\right) +
\sum_{\substack{ \emptyset \ne I \subset J\\  l \mid  N_I}} \frac{\chi(E_I^\circ)}{
\displaystyle
\prod_{i\in I} (N_i t + \nu_i)}
\]
Then, using \eqref{DLz(0)=1} and the expression of the local zeta function in terms of an embedded resolution \eqref{eq:zetas_res}, we have
\begin{equation*}
\Ztop{(l)}(F,\omega_{d+1},s)_\zero=
\frac{1}{\ex t}+
\Ztop{(l)}(f,\omega_{d},t)_\zero
-\frac{\Ztop{}(f,\omega_{d},t)_\zero}{\ex t}
-\frac{1}{\ex }\sum_{\emptyset \ne I \subset J}
\frac{e_I^2\chi(E_I^\circ)}{\displaystyle\prod_{i\in I} (N_it + \nu_i)},
\end{equation*}
which gives \eqref{twisted1fsuspACNLMth} with the same arguments
as in the proof of Theorem \ref{thmpsuspACNLM}.

The identity \eqref{eq:tzetatopW} and Corollary~~\ref{twistedaportop} are used again for the formula of
 $\Ztop{(l)}(F,\omega_{d+1},s)_\zero$ with $l \nmid \ex $. In this case, we have
\begin{equation*}
\Ztop{(l)}(F,\omega_{d+1},s)_\zero=
\Ztop{(l)}(f,\omega_{d},t)_\zero-
\sum_{\substack{\emptyset \ne I \subset J\\ l \mid \lcm(\ex ,N_I) }}
\frac{e_I^2\chi(E_I^\circ)  }{
\displaystyle \ex  \prod_{i\in I} (N_i t + \nu_i)}.
\end{equation*}
Let us rewrite the last summand as
\begin{equation}\label{eq:auxproof}
  \sum_{\substack{\emptyset \ne I \subset J\\ l \mid \lcm(\ex ,N_I) }}
\frac{e_I^2\chi(E_I^\circ)  }{
\displaystyle  \ex  \prod_{i\in I} (N_i t + \nu_i)} = \sum_{k | \ex } \frac{ k^2 }{\ex } \sum_{\substack {  \emptyset \ne I \subset J\\  k=e_I  \\ l \mid \lcm (\ex , N_I)} } \frac{\chi(E_I^\circ)  }{
\displaystyle  \prod_{i\in I} (N_i t + \nu_i)} .
\end{equation}
Since $l \mid \lcm (\ex , N_I)=\frac{N_I}{e_I}\ex $ is equivalent to  $l_1 \mid \lcm (\ex , N_I)=\frac{N_I}{e_I}\frac{\ex }{\gcd(\ex ,l)}$
and $l_1$ is coprime to $\frac{\ex }{\gcd(\ex ,l)}$,
applying \eqref{jk} to \eqref{eq:auxproof} we get
\begin{equation*}
\sum_{k \mid  \ex }  \sum_{e \mid k | \ex } \frac{ J_2(e) }{\ex } \sum_{\substack {  \emptyset \ne I \subset J\\  k= e_I \\ l_1k \mid N_I } } \frac{\chi(E_I^\circ)  }{
\displaystyle  \prod_{i\in I} (N_i t + \nu_i)} =    \sum_{e \mid \ex } \frac{ J_2(e) }{\ex }  \sum_{ e \mid k \mid  \ex } \sum_{\substack {  \emptyset \ne I \subset J\\  k= e_I \\ l_1k \mid N_I } } \frac{\chi(E_I^\circ)  }{
\displaystyle  \prod_{i\in I} (N_i t + \nu_i)}.
\end{equation*}
From Lemma~\ref{lema:arit} this term can be rewritten as
\begin{equation*}
\sum_{e \mid \ex } \frac{ J_2(e) }{\ex } \sum_{\substack {\emptyset \ne I \subset J\\
e\mid N_I\\
N_I\in  D(\ex , l_1
)
} } \frac{\chi(E_I^\circ)  }{
\displaystyle  \prod_{i\in I} (N_i t + \nu_i)}=
\sum_{e \mid \ex } \frac{ J_2(e) }{\ex } \sum_{\substack {\emptyset \ne I \subset J\\
e\mid N_I\\
l_1 m \mid N_I
} } \frac{\chi(E_I^\circ)  }{
\displaystyle  \prod_{i\in I} (N_i t + \nu_i)}.
\end{equation*}
Indeed, the conditions $e_I = k$ and $l_1k \mid N_I$ are equivalent to $N_I \in D(\ex , l_1)$
because of the definitions of $e_I$ and $D(\ex , l_1)$. And the condition $N_I \in D(\ex , l_1)$
is equivalent to $l_1 m \mid N_I $ because of the last statement
of Lemma~\ref{lema:arit}. Finally, using the properties of $\lcm$ we get~\eqref{twisted2fsuspACNLMth}.
\end{proof}

As we did for $G$, we can
bound the set of poles of $\Ztop{}(F,\dif x_1\wedge\dots\wedge\dif x_d\wedge\dif z,s)$.

\begin{cor}
The set $\pol(F,\dif x_1\wedge\dots\wedge\dif x_d\wedge\dif z)$ is contained in
\[
\left\{
-1, \frac{-1}{\ex}
\right\}
\cup
\left\{
\rho_0-\frac{1}{\ex}\,
\middle| \,
\rho_0\in\pol(f,\omega_d)
\right\}.
\]
\end{cor}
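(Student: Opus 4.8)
The plan is to obtain the bound by inspecting the closed formula of Theorem~\ref{thmfsuspACNLM}(a), equivalently~\eqref{fsuspACNLM}. First I would identify $\dif x_1\wedge\dots\wedge\dif x_d\wedge\dif z$ with the monomial form $\omega_{d+1}=\mathbf{x}^{\boldsymbol{\nu}}z\,\frac{\dif\mathbf{x}}{\mathbf{x}}\frac{\dif z}{z}$ of that theorem taken with $\boldsymbol{\nu}=\mathbf 1$ (and $p=0$, $\nu_z=1$); then the companion form on the base is $\omega_d=\dif x_1\wedge\dots\wedge\dif x_d$, and the parameter $r=\frac{(\ex+p)s+\nu_z}{\ex}$ of Theorem~\ref{thmpsuspACNLM} specialises to $t=s+\frac1\ex$. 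Clearing the factor $\frac{s+1}{s}$ in \eqref{fsuspACNLMth}, this presents $\Ztop{}(F,\dif x_1\wedge\dots\wedge\dif x_d\wedge\dif z,s)_\zero$ as a sum of products of the elementary rational functions $\frac1{\ex t}$, $\frac{s}{s+1}$ and $\frac{t+1}{t}$ (together with constants) with $\Ztop{}(f,\omega_d,t)_\zero$ and with the twisted local zeta functions $\Ztop{(e)}(f,\omega_d,t)_\zero$ for $1\neq e\mid\ex$, all evaluated at $t=s+\frac1\ex$.

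Next I would simply read off the poles in the variable $s$: every pole of the left-hand side must be a pole of one of these building blocks. The factors $\frac1{\ex t}$ and $\frac{t+1}{t}$ contribute only the value $t=0$, that is $s=-\frac1\ex$; the factor $\frac{s}{s+1}$ contributes only $s=-1$; and $\Ztop{}(f,\omega_d,t)_\zero$, as well as every $\Ztop{(e)}(f,\omega_d,t)_\zero$, has all its poles at values $t=\rho_0$ with $\rho_0\in\pol(f,\omega_d)$, hence at $s=\rho_0-\frac1\ex$. The union of these finitely many numbers is exactly the set in the statement. Since only an upper bound on the pole set is asked for, it is irrelevant whether some of these candidates cancel in the final expression. (The identical computation for $z^d\dif x_1\wedge\dots\wedge\dif x_d\wedge\dif z$, for which Theorem~\ref{thmpsuspACNLM} has $\nu_z=d+1$ and $r=\frac{(\ex+p)s+d+1}{\ex}$, gives the earlier corollary about $G$: the extra values $-\frac{d+1}{\ex+p}$ and $-\frac{d+1}{p}$ are the poles of $\frac1{\ex r}$ and of the summand $\frac{s}{r(\nu_z+ps)}$ of $U(r,s)$ respectively.)

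The only step deserving a word of justification — and the main obstacle — is the claim that the poles of the twisted functions $\Ztop{(e)}(f,\omega_d,\cdot)$ are covered by $\pol(f,\omega_d)$. For this I would appeal to the resolution expressions~\eqref{eq:zetas_res}: fixing an embedded resolution of $f\omega_d$ with numerical data $(N_i,\nu_i)_{i\in J}$, both $\Ztop{}(f,\omega_d,s)$ and $\Ztop{(e)}(f,\omega_d,s)$ are finite sums of terms $\chi(E_I^\circ)\bigl(\prod_{i\in I}(N_is+\nu_i)\bigr)^{-1}$, the twisted one being restricted to the subsets $I$ with $e\mid N_i$; hence all their poles lie among the candidate poles $-\nu_i/N_i$ of that resolution, which is the finite set denoted $\pol(f,\omega_d)$ here. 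Equivalently — and this is perhaps the cleanest route, bypassing \eqref{fsuspACNLM} altogether — one may apply the stratification set-up of Section~\ref{sec:motivic_zeta_susp} together with Lemma~\ref{aportop} directly: for $p=0$, $\nu_z=1$ the stratum-$I$ contribution, $\chi(E_I^\circ)$ times $\str^{\sigma^+}_{\topo}+\str^{\sigma^-}_{\topo}+\str^{\rho}_{\topo}+\str^{\rho^\ast}_{\topo}$, has poles only at $t=0$, at $s=-1$, and at $t=-\nu_k/N_k$ for $k\in I$, so summing over $I$ yields the claimed inclusion at once, the only care being to recall that $\pol(f,\omega_d)$ encodes precisely the values $-\nu_k/N_k$.
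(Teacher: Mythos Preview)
Your overall route—read off the candidate poles from the closed formula of Theorem~\ref{thmfsuspACNLM}(a) after specialising $p=0$, $\nu_z=1$ so that $r=t=s+\tfrac1\ex$—is exactly what the paper intends; the corollary is stated without proof as an immediate consequence of that theorem, and your itemisation of the contributions $\tfrac1{\ex t}$, $\tfrac{s}{s+1}$, $\tfrac{t+1}{t}$ is correct.

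There is, however, a real gap at precisely the point you flag as ``the main obstacle''. The paper defines $\pol(f,\omega_d)$ as the set of \emph{actual} poles of $\Ztop{}(f,\omega_d,s)_\zero$, not as the set of candidate poles $-\nu_i/N_i$ coming from a chosen resolution. Your sentence ``\dots which is the finite set denoted $\pol(f,\omega_d)$ here'' conflates the two. With the paper's definition, the inclusion you need is that every pole of each $\Ztop{(e)}(f,\omega_d,\cdot)$ is already a pole of the untwisted $\Ztop{}(f,\omega_d,\cdot)$; this is not automatic, since a cancellation producing a fake pole in the full sum over strata need not persist in the restricted sum $\sum_{e\mid N_I}$. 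Your alternative argument via Lemma~\ref{aportop} suffers from the same issue: it bounds $\pol(F,\ldots)$ by $\{-1,-\tfrac1\ex\}\cup\{-\tfrac{\nu_k}{N_k}-\tfrac1\ex\}$, which is the shift of the \emph{candidate} poles of $f$, a priori larger than the set in the statement.

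So either the corollary is meant to be read with $\pol(f,\omega_d)$ standing for the candidate-pole set (in which case your argument is complete and matches the paper's intent), or an extra argument is required to pass from candidate poles to actual poles of $f$. You should at least make explicit which reading you are using; as written, your justification of the key step does not match the paper's stated definition of $\pol$.
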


\begin{remark}\label{matrix} Equations \eqref{fsuspACNLMth} and \eqref{twisted1fsuspACNLMth} from Theorem \ref{thmfsuspACNLM} can be presented in a compact way using matrices. To see this consider a total order in the finite set $\mathcal{D}(\ex )$ of divisors of $\ex$ such that 1 is the smallest element, e.g.,  the usual order $\leq$ in $\NN$. Denote by $l_i$ the $i$-th element of $\mathcal{D}(\ex )$ under this order. We take the following $\abs{\mathcal{D}(\ex)}$-vectors:
\begin{itemize}
\item $ZF(s)$  whose first entry is   $\frac{s+1}{s}\Ztop{} (F,\omega_{d+1},s)$  and the other entries are  $\Ztop{(l_i)}(F,\omega_{d+1},s)$.
\item $Zf(t)$  whose first entry is   $\frac{t+1}{t}\Ztop{} (f,\omega_{d},t)$  and the other entries are  $\Ztop{(l_i)}(f,\omega_{d},t)$.
\item $A  = (\frac{s+1}{s}, 1, 1, \dots, 1)^t$.
\item $J_2$ whose $i$th entry is $J_2(l_i)$.
\end{itemize}
Finally, set $B$ for the square matrix of size $\abs{\mathcal{D}(\ex)}$ defined by $\ex  \cdot\mathbf{1}_{\abs{\mathcal{D}(\ex)}} - J $, where $\mathbf{1}_{\abs{\mathcal{D}(\ex)}}$ is the identity matrix, and the rows of $J$ are all equal to the vector $J_2$.
Then, the statement of Theorem \ref{thmfsuspACNLM} becomes:
\[
\frac{1}{\ex }ZF(s)  =  \frac{1}{t}A + B  \cdot Zf(t).
\]
\end{remark}

\begin{example}\label{ex:5-6-10}
Consider $f= x^5 + y^6$ and $\ex =10$. We have:
\[
\Ztop{(d)}(f,s) =
\begin{cases}
\frac{ 10 s + 11}{(30 s + 11)(s +1)}, & d=1\\
 \frac{4}{30 s + 11},  &d=2, 3, 6\\
\frac{5}{30s + 11}, &d=5\\
\frac{ -1}{30s + 11}, &d=10, 15, 30\\
\end{cases}
\quad \text{ and } \quad
\Ztop{(d)}(F,s) =
\begin{cases}
\frac{3s + 7}{(15 s +7) (s +1 )},& d=1\\
\frac{6}{15s + 7} & d=3,6\\
\frac{1}{2(15s + 7)}, &d=5\\
\frac{-5}{2(15s + 7)}, &d=10\\
\frac{7}{2(15s + 7)}, & d=15, 30\\
\end{cases}
\]
For other values of $d$, the zeta functions vanish.
Therefore, for $t = s+ \frac{1}{10}$, we have that
\[
10
\begin{pmatrix}
\frac{s+1}{s} \Ztop{}(F,s)_0 \\
\Ztop{(2)}(F,s)_0 \\
\Ztop{(5)}(F,s)_0 \\
\Ztop{(10)}(F,s)_0
\end{pmatrix}
=
\frac{1}{t}
\begin{pmatrix}
\frac{s+1}{s}\\
1\\
1\\
1
\end{pmatrix}
+
\begin{pmatrix}
9&-3&-24&-72\\
-1&7&-24&-72\\
-1&-3&-14&-72\\
-1&-3&-24&-62\\
\end{pmatrix}
\begin{pmatrix}
\frac{t+1}{t} \Ztop{}(f,t)_0 \\
\Ztop{(2)}(f, t)_0 \\
\Ztop{(5)}(f, t)_0 \\
\Ztop{(10)}(f, t)_0
\end{pmatrix}
\]
Let us see what happens for the values $l = 3,6,15,30 \nmid \ex =10$.  Notice that $l_1=3$ always, and $D(\ex , l_1) = \{3,6,15,30\}$ with $3$ as its minimum. Hence,
\[
\begin{aligned}
\Ztop{(l)}(F,s)_\zero &= \Ztop{(l)}(f,t)_\zero - \frac{1}{10} \Ztop{(3)}(f,t)_\zero -   \frac{3}{10} \Ztop{(6)}(f,t)_\zero  - \frac{24}{10} \Ztop{(15)}(f,t)_\zero - \frac{72}{10} \Ztop{(30)}(f,t)_\zero  \\~~~~+
&= \Ztop{(l)}(f,t)_\zero - \frac{4}{10} \Ztop{(3)}(f,t)_\zero - \frac{96}{10} \Ztop{(15)}(f,t)_\zero .
\end{aligned}
\]
\end{example}

\begin{example} \label{ex:LvP}
Consider the polynomial $f=x^{12} + y^{13} + z^{14} + xy^7 + y^2z^4 + x^2y^2z^3$ from \cite[Example 2]{MR2806692} and $\ex =84$.
The Newton polygon of $f$ contains a compact face determined by the monomials $xy^7$, $y^4z^4$, and $x^2y^2z^4$ and its contained in the affine hyperplane with equation $5p+7q+10r= 54$. For $\ex =84$ and $l=27 \nmid \ex =84$, we have that $l_1= 9$, and $m=3$. For $e \mid 84$ we have
\[
\lcm (e, 27) =
\begin{cases}
27e& \text { if } 3 \nmid e\\
9e& \text{ otherwise},
\end{cases}
\]
and
\[
\Ztop{(\lcm (e, 27) )}(f,\omega, s)_\zero =
\begin{cases}
\frac{1}{2 (27s + 11)}& \text { if }  e = 1,2,3,6\\
0& \text{ otherwise},
\end{cases}
\]
Therefore, we have
\[
\begin{aligned}
\Ztop{(27)}(F,s) &=  \Ztop{(27)}(f,t) - \sum_{e \mid 84} \frac{J_2(e)}{84} \Ztop{(\lcm(27,e))}(f,t) = \Ztop{(27)}(f,t) - \sum_{e \mid  6} \frac{J_2(e)}{84} \Ztop{(\lcm(27,e))}(f,t)
\\
&=  \Ztop{(27)}(f,t) - \frac{1}{84} \Ztop{(27)}(f,t)  -
\frac{3}{84} \Ztop{(54)}(f,t) - \frac{8}{84} \Ztop{(27)}(f,t) -
\frac{24}{84} \Ztop{(54)}(f,t)\\
 &= \frac{12}{21} \Ztop{(27)}f,t)  = \frac{8}{756s + 317}
 \end{aligned}
 \]
\end{example}

\begin{proof}[Proof of Lemma{\rm~\ref{lema:arit}}] The statement \ref{arit-1} follows directly from the definition of the set $D(\ex , l_1)$.

For any prime number $p$ let us denote the corresponding $p$-valuation by $\nu_p$.  Recall that if  $M_1 \mid M_2$ then $\nu_p(M_1) \leq  \nu_p(M_2)$. In particular, if  $M:=\gcd(M_1,M_2)$, then $\nu_p(M)=\min(\nu_p(M_1),\nu_p(M_2))$.

The property $M\in D(\ex , l_1)$ is equivalent to the property
\begin{equation}\label{valD}
\nu_p(l_1) + \min(\nu_p(\ex ),\nu_p(M))\leq\nu_p(M) \quad \text{ for all } p.
\end{equation}
Let us assume  $M_1,M_2\in D(\ex , l_1
)$. Then  $M=\gcd(M_1,M_2)$ satisfies that
\begin{gather*}
\nu_p(l_1) + \min(\nu_p(\ex ),\nu_p(M))=
\nu_p(l_1) + \min(\nu_p(\ex ),\min(\nu_p(M_1),\nu_p(M_2)))\\
=\nu_p(l_1) + \min(\nu_p(\ex ),\nu_p(M_1),\nu_p(M_2))\leq
\nu_p(l_1) + \min(\nu_p(\ex ),\nu_p(M_1))\leq\nu_p(M_1)
\end{gather*}
and analogously
$\nu_p(l_1) + \min(\nu_p(\ex ),\nu_p(M))\leq\nu_p(M_2)$.
Hence
\[\nu_p(l_1) + \min(\nu_p(\ex ),\nu_p(M))\leq \min (\nu_p(M_1), \nu_p(M_2)) = \nu_p(M).\]
Thus $M = \gcd(M_1, M_2) \in D(\ex ,l_1)$ and the statement \ref{arit-2} holds.

Let us assume that $M_1 \in D(\ex , l_1)$ and $M_1 \mid M_2$. If $\min ( \nu_p(\ex ), \nu_p(M_1)) = \nu_p(\ex ) $ then \eqref{valD} implies
$\nu_p(l_1) + \nu_p (\ex ) \leq \nu_p (M) $. Since $M_1 \mid M_2$ we have $\nu_p(l_1) + \nu_p (\ex ) \leq \nu_p (M_1) \leq \nu_p(M_2)$, i.e., $M_2 \in D(\ex , l_1)$. If instead one has, $\min ( \nu_p(\ex ), \nu_p(M_1)) = \nu_p(M_1)$, the latter and  \eqref{valD}  imply that $\nu_p(l_1)=0$ and hence $M_2 \in D(\ex , l_1)$. We conclude that  statement \ref{arit-3} holds.

A set of natural numbers closed by taking multiples and $\gcd$'s is the set of multiples of a given number~$m$.
The formula for~$m$ is straightforward.
\end{proof}

\section{Comparison with previous work}\label{comp}

The formula for the suspension by two points $F= z^2 + f(x_1, \ldots, x_{d})$ of a hypersurface singularity $f$
was proved in \cite[Theorem 1.1]{ACNLM-JLMS}. Their formula is a special case of \eqref{fsuspACNLM} or
Theorem~\ref{thmfsuspACNLM}.
A correspondent suggested to Artal \emph{et al.} the following generalization for $F = z^\ex  - f(x_1, \ldots, x_{d})$, i.e. the suspension by $\ex$ points,
\begin{align}\label{fsuspACNLM-Loeser}
	\frac{\ex -1}{\ex }
	\frac{s}{s+1}
	\frac{t+1}{t}\cdot \Ztop{}(f,t)_0
	- \frac{s}{s+1} \sum_{1\neq e | \ex } \frac{(e+1) \phi(e)}{\ex } \Ztop{(e)}(f,t)_0 + \frac{1}{\ex t},
\end{align}
where $\phi$ is the Euler totient function. This expression is supposedly based on the motivic Thom-Sebastiani theorem~\cite{Denef-LoeserDuke},
see the \emph{Note added in proof} in~\cite{ACNLM-JLMS}.

It is however easy to check that  \eqref{fsuspACNLM-Loeser} is inaccurate. For instance, consider the polynomial $F(x,y,z) = x^\ex  + y^\ex  + z^\ex $ that is non degenerated with respect to its Newton polyhedra. According to  \cite[Theorem 5.3 or Example (5.4)]{DL-JAMS},  we have that
\begin{equation}\label{NonDegFormula}
\Ztop{}(F,s)_\zero= \frac{(\ex -2)(\ex -1)s+s+3}{(s+1)(\ex s+3)}.
\end{equation}
On the other hand, $F(x,y,z)$ is the suspension by $\ex $ points of the plane curve singularity defined by $f(y,z)=y^\ex +z^\ex $. An embedded resolution of $\{f=0\}$ is obtained by a single point blow-up. One has a unique exceptional divisor $E_1$ with numerical data $(N_1, \nu_1)=(\ex ,2)$ that is intersected by the $\ex $ components of the strict transform of $\{f=0\}$. Hence, using well known formulas for the local topological zeta functions, one gets that
\[
Z^{(e)}_{\mathrm{top}}(f,s)_\zero=
\begin{cases}
 \frac{2+(2-\ex )s}{(\ex s+2)(s+1)} & \text{ for } e =1,\\
\frac{2-\ex }{\ex s+2}& \text{ for } 1 \ne e | \ex ,\\
0 & \text{ otherwise.}
\end{cases}
\]
Applying the above formula \eqref{fsuspACNLM-Loeser} for the suspension one gets the expression
\begin{equation}\label{CalcSusp}
\frac{-\ex ^2s+(4s+3)\ex -2s}{\ex (s+1)(\ex s+3)} - \frac{s}{s+1} \sum_{e | \ex , e \ne 1} \frac{(e+1) \phi(e)}{\ex } \frac{2-\ex }{\ex s+3}.
\end{equation}
The difference between  \eqref{NonDegFormula}  and \eqref{CalcSusp} is given by
\begin{equation}\label{resta}
\frac{(\ex -2)s}{\ex (s+1)(\ex s+3)} \big ((\ex +1)(\ex -1) -  \sum_{e | \ex , e \ne 1} (e+1) \phi(e) \big )
\end{equation}
As a consequence of Gauss' identity \eqref{eq:Gauss}, the expression (\ref{resta}) vanishes if and only if $\ex $ is prime.
The agreement between \eqref{NonDegFormula} and \eqref{fsuspACNLM} or Theorem \ref{thmfsuspACNLM} follows from the fact that there is a unique strata with $1 \ne e_I$ and in this case $e_I=\ex $.

 \providecommand\noopsort[1]{}
\providecommand{\bysame}{\leavevmode\hbox to3em{\hrulefill}\thinspace}
\providecommand{\MR}{\relax\ifhmode\unskip\space\fi MR }
\providecommand{\MRhref}[2]{%
  \href{http://www.ams.org/mathscinet-getitem?mr=#1}{#2}
}
\providecommand{\href}[2]{#2}

\end{document}